\newtheorem{thm}{Theorem}[section]
\newtheorem{lem}[thm]{Lemma}
\theoremstyle{remark}
\newtheorem{rem}[thm]{Remark}
\numberwithin{equation}{section}
\newcommand{\Cov}{\textup{Cov\,}}
\newcommand{\EE}{\textup{I}\!\textup{E\,}}
\newcommand{\Oh}{\mathcal{O}}
\newcommand{\PP}{\textup{I}\!\textup{P\,}}
\newcommand{\Var}{\textup{Var\,}}
\renewcommand{\(}{\left(}
\renewcommand{\)}{\right)}
\renewcommand{\i}{\mathrm{i}}
\newcommand{\eps}{\varepsilon}
\begin{document}
\title[Asymptotics of some Plancherel averages via polynomiality results]
{Asymptotics of some Plancherel averages via polynomiality results}

\author{Werner Schachinger}
\address{Dept.\ of Statistics and Operations Research (ISOR), University of Vienna\\
Vienna, Austria}
\email{Werner.Schachinger@univie.ac.at}
\thanks{}
\subjclass{}
\keywords{Robinson--Schensted algorithm, Young diagram, Plancherel
measure, Durfee square, asymptotic expansion, Vershik--Kerov conjecture.}

\date{\today}
\begin{abstract}
Consider Young diagrams of $n$ boxes distributed according to the Plancherel measure. So those diagrams could be the output of the RSK algorithm, when applied to random permutations of the set $\{1,\ldots,n\}$. Here we are interested in asymptotics, as $n\to \infty$, of expectations of certain
functions of random Young diagrams, such as the number of bumping steps of the RSK algorithm that leads to that diagram, the side length of its Durfee square, or the logarithm of its probability. We can express these functions in terms of hook lengths or contents of the boxes of the diagram,
which opens the door for application of known polynomiality results for Plancherel averages. We thus obtain representations of expectations
as binomial convolutions, that can be further analyzed with the help of
Rice's integral or Poisson generating functions. Among our results is
a very explicit expression for the constant appearing in the almost equipartition property of the Plancherel measure.

\end{abstract}
\maketitle
\section{Introduction}
We identify
Young diagrams (sets consisting of left aligned decreasingly ordered rows of square boxes)
with partitions $\lambda=(\lambda_1,\ldots,\lambda_k)$ with
$\lambda_1\ge\lambda_2\ge\cdots\ge\lambda_k$, and denote $|\lambda|=\lambda_1+\ldots+\lambda_k$. The notation $\lambda\vdash n$ then signifies that $\lambda$ is a partition of $n$, i.e., $|\lambda|=n$. We let
$Y(\pi)=Y_\lambda:=\sum_{\ell=1}^k\lambda_\ell(\ell-1)$ denote the number
of bumping steps of the Robinson--Schensted algorithm (see Figures~\ref{fig:RobSchen} and \ref{fig:BumpSteps})
when applied to a permutation $\pi$
that is mapped to a pair of standard Young tableaux of shape $\lambda$.
A standard Young tableau is a Young diagram $\lambda$ filled with numbers $1,\ldots,|\lambda|$ in
a way such that numbers in each row and each column are increasing.
See e.g.~\cite[sec.\,1.6]{Rom15} or \cite[sec.\,3.1]{Sag01} for nice expositions of the algorithm and references to the original articles
by Gilbert~de~Beauregard~Robinson, by Craige~Eugene~Schensted, and by Donald~Ervin~Knuth, who significantly widened the scope of the algorithm, the abbreviation with reference to all three authors, \emph{RSK algorithm}, now frequently being used also to refer to the original Robinson--Schensted algorithm.
\begin{figure}
\centering
\ytableausetup{centertableaux}\small\footnotesize
\begin{tabular}{c|cccccccc}
\normalsize&7&5&1&8&6&3&4&2\\ \hline\\[-.3cm]
\tiny{\normalsize$P$}&\begin{ytableau}
7
\end{ytableau}&
\begin{ytableau}
5\\
7
\end{ytableau}&
\begin{ytableau}
1 \\
5 \\
7
\end{ytableau}&
\begin{ytableau}
1 & 8 \\
5 \\
7
\end{ytableau}&
\begin{ytableau}
1 & 6 \\
5 & 8 \\
7
\end{ytableau}&
\begin{ytableau}
1 & 3 \\
5 & 6 \\
7 & 8
\end{ytableau}&
\begin{ytableau}
1 & 3 & 4\\
5 & 6 \\
7 & 8
\end{ytableau}&
\begin{ytableau}
1 & 2 & 4\\
3 & 6 \\
5 & 8 \\
7
\end{ytableau}\\[0.9cm]
{\normalsize$Q$}&\begin{ytableau}
1
\end{ytableau}&
\begin{ytableau}
1 \\
2
\end{ytableau}&
\begin{ytableau}
1 \\
2 \\
3
\end{ytableau}&
\begin{ytableau}
1 & 4 \\
2 \\
3
\end{ytableau}&
\begin{ytableau}
1 & 4 \\
2 & 5\\
3
\end{ytableau}&
\begin{ytableau}
1 & 4 \\
2 & 5\\
3 &6
\end{ytableau}&
\begin{ytableau}
1 & 4 &7\\
2 & 5\\
3 &6
\end{ytableau}&
\begin{ytableau}
1 & 4 &7\\
2 & 5\\
3 &6\\
8
\end{ytableau}
\end{tabular}
\caption{Tableaux $P$ and $Q$, as they evolve when subjecting the RSK algorithm to the
permutation $\pi=(75186342)$. $P$ is constructed by row insertions of elements of $\pi$ one by one, while $Q$ is recording the position of boxes as they are added.
}
\label{fig:RobSchen}
\end{figure}

\begin{figure}
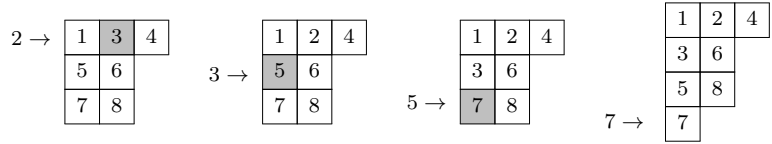

\centering%
\ytableausetup{centertableaux}\small\footnotesize
\begin{tabular}{ccccccccc}
\begin{tabular}{c}
{$2\to$}\\[.5cm]
 \\
\end{tabular}\begin{ytableau}
1 & *(lightgray)3 & 4\\
5 & 6 \\
7 & 8
\end{ytableau}&
\begin{tabular}{c}
\\[0cm]
 {$3\to$}\\[.25cm]
\end{tabular}\begin{ytableau}
1 &2 & 4\\
*(lightgray)5 & 6 \\
7 & 8
\end{ytableau}&
\begin{tabular}{c}
\\[.5cm]
 {$5\to$}\\
\end{tabular}\begin{ytableau}
1 &2 & 4\\
3 & 6 \\
*(lightgray)7 & 8
\end{ytableau}&
\begin{tabular}{c}
\\[1cm]
 {$7\to$}
\end{tabular}
\begin{ytableau}
1 & 2 & 4\\
3 & 6 \\
5 & 8 \\
7
\end{ytableau}
\end{tabular}
\caption{Detailed insertion of 2 into the second to last tableau $P$. Inserting 2, 3, and 5 into their (shaded) destination boxes causes bumps of elements down one row, being now in need of insertion themselves. In the last step, 7 is the largest element of row four, so this insertion happens without a pump.
}
\label{fig:BumpSteps}
\end{figure}
We denote by $Y_n$ the restriction of $Y(\pi)$ to permutations of the set
$\{1,2,\ldots,n\}$ chosen uniformly at random.
The Young diagrams $\lambda$ obtained by the RSK algorithm are then distributed
according to the $n$th Plancherel measure, i.e.,
$\PP\!l^{(n)}(\lambda)=\frac{f_\lambda^2}{n!}=\frac{n!}{p_\lambda^2}$,
where $f_\lambda$ is the number of standard Young tableaux of shape $\lambda$, satisfying $f_\lambda=\frac{n!}{p_\lambda}$, and where
$p_\lambda:=\prod_{u\in\lambda}h_u$ denotes the product
of the hook lengths of the diagram $\lambda$, see~\cite{FRT54}. Here the hook length $h_u$ of a particular box $u$ of $\lambda$ is one more than the number of boxes to the right of $u$ plus the number of boxes below $u$.
Note that $Y_\lambda$ has also the meaning of $|\lambda|$ times the
$y$-coordinate of the barycenter of the set
$$S_\lambda:=\{(i,j)\in\mathbb{Z}^2:0\le j\le k-1,0\le i\le \lambda_{j+1}-1\},$$
which is just the set of lower left corners of the boxes of $\lambda$ in French notation, which addresses boxes by Cartesian coordinates of the first quadrant.
Apart from here we always stick to English notation with its matrix style indexing of boxes.
Note that $X_\lambda$, the $x$-coordinate of the barycenter of $S_\lambda$, is given by
$Y_{\lambda'}$, where $\lambda'$ is the partition conjugate to $\lambda$, its parts being defined by $\lambda'_j:=|\{i:\lambda_i\ge j\}|$. Stated differently, $\lambda$ and $\lambda'$ are mirror images of one another with respect to the main diagonal (upper left to lower right). The sets of hook lengths are therefore the same for $\lambda$ and $\lambda'$, which yields invariance of Plancherel measure under conjugation. Thus $X_n$ and $Y_n$ are identical in distribution.
This allows for a representation of $\EE Y_n$ and $\Var Y_n$ in terms of
$X_n+Y_n$ and $X_n-Y_n$,
\begin{align*}
\EE Y_n&=\tfrac12\EE(X_n+Y_n),\\
\Var Y_n&=\tfrac{1}{2}\(\Var X_n+\Var Y_n\)=\tfrac{1}{4}\big(\Var (X_n+Y_n)+\Var (X_n-Y_n)\big).\end{align*}
Note that we can express $X_\lambda-Y_\lambda$, resp.\ $X_\lambda+Y_\lambda$, in terms of contents $\{c_u:u\in\lambda\}$, resp.\ hook lengths $\{h_u:u\in\lambda\}$, of the diagram $\lambda$:
\begin{equation}\label{XpmY}X_\lambda-Y_\lambda=\sum_{u\in\lambda}c_u,\qquad X_\lambda+Y_\lambda=\sum_{u\in\lambda}h_u-|\lambda|.\end{equation}
Here the content $c_u$ of a box $u=(i,j)$ of $\lambda$ is $j-i$, i.e., the column number of $u$ minus the row number of $u$, see Figure~\ref{fig:hooklengthsandcontents}
for an illustration of hook lengths, contents, and bumping step counts.
For a proof of \eqref{XpmY} note $$Y_\lambda=\sum_{i=1}^k\lambda_i(i-1)=\sum_{(i,j)\in\lambda}(i-1)=\sum_{j=1}^{\lambda_1}\sum_{i=1}^{\lambda'_j}(i-1)=\sum_{j=1}^{\lambda_1}\sum_{i=1}^{\lambda'_j}(\lambda'_j-i)=\sum_{(i,j)\in\lambda}(\lambda'_j-i),$$
and similarly
$$X_\lambda=Y_{\lambda'}=\sum_{(i,j)\in\lambda}(j-1)=\sum_{(i,j)\in\lambda}(\lambda_i-j),$$
leading to $X_\lambda-Y_\lambda=Y_{\lambda'}-Y_\lambda=\sum\limits_{(i,j)\in\lambda}\Big[(j-1)-(i-1)\Big]
=\sum\limits_{(i,j)\in\lambda}(j-i)=\sum\limits_{u\in\lambda}c_u$
and $X_\lambda+Y_\lambda+|\lambda|=\sum\limits_{(i,j)\in\lambda}\Big[(\lambda_i-j)+(\lambda'_j-i)
+1\Big]=\sum\limits_{u\in\lambda}h_u$, where $(\lambda_i-j)+(\lambda'_j-i)+1$ is clearly the hook length of box $(i,j)$.

Further functions of $\lambda$ that can be written in terms of hook lengths or contents are
$\log p_\lambda=\sum_{u\in\lambda}\log(h_u)$ making its appearance in section~3, and $D(\lambda)=\sum_{u\in\lambda}\delta_{0,c_u}$, the number of boxes of $\lambda$ on the main diagonal, that we will meet in section~4.
\begin{figure}
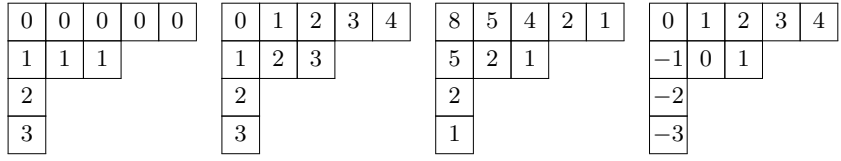

\centering\footnotesize
\ytableausetup{centertableaux}\small
\begin{ytableau}
0 & 0 & 0 & 0 & 0 \\
1 & 1 & 1 \\
2 \\
3
\end{ytableau}\quad
\begin{ytableau}
0 & 1 & 2 & 3 & 4 \\
1 & 2 & 3 \\
2 \\
3
\end{ytableau}\quad
\begin{ytableau}
8 & 5 & 4 & 2 & 1 \\
5 & 2 & 1 \\
2 \\
1
\end{ytableau}\quad
\begin{ytableau}
0 & 1 & 2 & 3 & 4 \\
-1 & 0 & 1 \\
-2 \\
-3
\end{ytableau}
\caption{The partition $(5,3,1,1)\vdash 10$, drawn as Young diagram, filled from left to right with bumping step counts, sums of box coordinates, hook lengths, and contents. The sum of the entries of the second diagram is 10 less than the sum of the hook lengths.
}
\label{fig:hooklengthsandcontents}
\end{figure}

Being able to express some function of $\lambda$ in terms of the contents or hook lengths of the boxes of $\lambda$ can allow us to employ the polynomiality results for Plancherel averages derived by Stanley~\cite{Stan10}.
\begin{thm}{\textup{(\cite[Thm.\,2.1,Thm.\,4.3]{Stan10})}}
Let $F(x)$ be a formal power series over $\mathbb{Q}$ of bounded degree that
is symmetric in the variables $x = (x_1,x_2,\ldots)$. Then both averages
$$\frac1{n!}\sum_{\lambda\vdash n}f_\lambda^2F(c_u:u\in\lambda)
\qquad\textup{and}\qquad
\frac1{n!}\sum_{\lambda\vdash n}f_\lambda^2F(h^2_u:u\in\lambda)$$
are polynomial functions of $n$.
\end{thm}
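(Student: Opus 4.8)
\noindent The plan is to reduce both statements to the case of power sums, handle the contents via Jucys--Murphy elements, and reduce the hook-length case to the contents case by translating symmetric functions of the $h_u^2$ into the same algebra.

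First I would observe that a symmetric formal power series of bounded degree is a polynomial, with rational coefficients independent of $n$, in the power sums $p_k=\sum_i x_i^k$; say $F=\mathrm{Poly}(p_1,\dots,p_D)$. Evaluating at the $n$ contents of a partition $\lambda\vdash n$ gives $F(c_u:u\in\lambda)=\mathrm{Poly}\big(P_1(\lambda),\dots,P_D(\lambda)\big)$ with $P_k(\lambda):=\sum_{u\in\lambda}c_u^k$, and likewise $F(h_u^2:u\in\lambda)=\mathrm{Poly}\big(H_1(\lambda),\dots,H_D(\lambda)\big)$ with $H_k(\lambda):=\sum_{u\in\lambda}h_u^{2k}$. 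By linearity it then suffices to prove that each average of a \emph{product} $\prod_j P_{k_j}$, resp.\ $\prod_j H_{k_j}$, is a polynomial in $n$.

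For the contents I would bring in the Jucys--Murphy elements $\xi_m=\sum_{i<m}(i\,m)\in\mathbb{C}[S_n]$. They commute, and on the Gelfand--Tsetlin basis of the irreducible module $V^\lambda$ (indexed by standard Young tableaux $T$ of shape $\lambda$) the element $\xi_m$ acts diagonally with eigenvalue the content of the box of $T$ holding $m$. Since the multiset of these eigenvalues over $m=1,\dots,n$ is exactly $\{c_u:u\in\lambda\}$, independently of $T$, any symmetric polynomial $g(\xi_1,\dots,\xi_n)$ is central and acts on all of $V^\lambda$ as the scalar $g(c_u:u\in\lambda)$. Taking $g$ to realize $F$ and computing the trace of $Z:=g(\xi_1,\dots,\xi_n)$ in the regular representation $\bigoplus_\lambda f_\lambda V^\lambda$, one finds that the coefficient of the identity permutation in $Z$ equals $\frac1{n!}\sum_\lambda f_\lambda^2\,F(c_u:u\in\lambda)$, i.e.\ the desired average. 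Now expanding $P_k(\xi)=\sum_m\xi_m^k$ shows each $\xi_m^k$ is supported on at most $k+1$ symbols, so a product $\prod_j P_{k_j}(\xi)$ is a $\mathbb{Z}$-linear combination of permutations each moving a bounded number of symbols, independently of $n$. The identity-coefficient then counts, with signs, the ways to write the identity as such a bounded product of transpositions; grouping these by the finitely many abstract patterns of relative order among the involved symbols, and noting that a pattern on $s$ symbols embeds into $\{1,\dots,n\}$ in $\binom{n}{s}$ order-respecting ways, exhibits the coefficient as a finite $\mathbb{Q}$-combination of the $\binom{n}{s}$, hence a polynomial in $n$. This bounded-support bookkeeping, essentially the Ivanov--Kerov theory of partial permutations, is the main technical point of the contents case.

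For the hook lengths the Jucys--Murphy trick does not apply directly, and this is where I expect the real difficulty. The route I would take is to prove that every $H_k=\sum_u h_u^{2k}$, and hence every symmetric function of the squared hooks, lies in the algebra generated by $n$ and the content power sums $P_1,P_2,\dots$ (which is precisely the algebra of shifted-symmetric functions of $\lambda$). The natural tool is the Frobenius, or Maya-diagram, encoding of $\lambda$: the full multiset of hook lengths can be read off from the particle positions $\lambda_i-i$, so its symmetric functions become symmetric functions of the shifted coordinates, which one then checks to be polynomials in the $P_k$ and $n$. Granting this translation, each product $\prod_j H_{k_j}$ expands as a $\mathbb{Q}$-polynomial in $n$ and in products of the $P_k$, whose averages are already polynomial in $n$ by the contents case, so the same holds for $\prod_j H_{k_j}$. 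Making this hook-to-content translation explicit, and verifying that it lands in the right algebra, is the step I expect to cost the most work.
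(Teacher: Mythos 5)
The first thing to note is that the paper does not prove this statement at all: it is quoted from Stanley \cite{Stan10}, and the only glimpse of Stanley's argument the paper records is that it reduces w.l.o.g.\ to elementary symmetric functions $e_\mu$ of the contents, producing explicit polynomials $N_\mu$ whose degrees the paper then exploits (e.g.\ in Lemma~\ref{lem:bigOm}). Your proposal is therefore necessarily a different route, and it is a recognizable one: Jucys--Murphy elements plus the partial-permutation bookkeeping of Ivanov--Kerov for the contents, and a transfer to shifted symmetric functions for the squared hooks. This is in substance the approach of Olshanski \cite{Ols10}, which the paper cites exactly as an ``alternative proof and further generalization'' of Stanley's theorem. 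Your contents half is complete and correct: a symmetric polynomial in the $\xi_m$ acts on each $V^\lambda$ as the scalar $F(c_u:u\in\lambda)$ because the $\xi_m$ are simultaneously diagonal on the Gelfand--Tsetlin basis with eigenvalue multiset $\{c_u\}$ independent of the tableau; the trace computation in the regular representation correctly identifies the identity coefficient with the Plancherel average (note the expansion has no signs, all coefficients are $+1$); and the pattern argument, with $\binom{n}{s}$ order-preserving embeddings of an $s$-symbol pattern, does exhibit that coefficient as a fixed $\mathbb{Q}$-combination of binomials, hence a polynomial in $n$. What Stanley's route buys is explicitness --- formulas for the $N_\mu$ and their degrees, which this paper actually needs; what your route buys is conceptual robustness and a mechanism that generalizes to other bounded-support statistics.

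The hook-length half, however, is a program rather than a proof as written, and you say so yourself. The two facts you rely on --- that the hook multiset of row $i$ is $\{1,\ldots,\beta_i\}\setminus\{\beta_i-\beta_j: j>i\}$ in terms of the beta-numbers $\beta_i=\lambda_i+N-i$, and that the algebra of shifted symmetric functions coincides with the algebra generated by $|\lambda|$ and the content power sums --- are both true, but the second is precisely the nontrivial Kerov--Olshanski/Okounkov--Olshanski theorem, and establishing it (or at least the inclusion that every $H_k=\sum_u h_u^{2k}$ is a polynomial in $n,P_1,P_2,\ldots$) is the genuine content of the hook case; asserting it parenthetically leaves the pivotal lemma unproven. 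There is also a stability point you gloss over: the expression obtained from the beta-numbers (Faulhaber polynomials in the $\beta_i$ minus even power sums of differences) must be shown independent of the truncation $N$ before it defines a shifted symmetric function. If you are allowed to cite \cite{Ols10} or Okounkov--Olshanski for these facts, your argument closes and is a clean alternative proof; as a self-contained blind proof, the hook case remains open at exactly the step you flagged as the most expensive.
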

Note that an even more general result is given in \cite[Thm\,4.4]{Stan10}. See also
\cite{Ols10} for alternative proofs and further generalizations.

The proof of
\cite[Thm.\,2.1]{Stan10} restricts w.\,l.\,o.\,g.~to elementary symmetric functions indexed by partitions $\mu=(\mu_1,\ldots,\mu_k)$, i.e., to functions
$F(\cdot)=e_\mu(\cdot)=\prod_{i=1}^{k}e_{\mu_i}(\cdot)$, where
$e_m(x_1,x_2,\ldots)=\sum_{i_1<\cdots<i_m}x_{i_1}\ldots x_{i_m}$ for $m\ge1$.
As remarked in \cite{Stan10} right below that proof, the resulting polynomial $N_\mu$ is of degree $|\mu|$ if and only if $|\mu|$ is even and $\mu_1\le\frac{|\mu|}2$, otherwise, $N_\mu=0$.
Here is an immediate application of these degree considerations.
\begin{lem}$\Var (X_n-Y_n)=\binom{n}{2}$.\label{lem:bigOm}\end{lem}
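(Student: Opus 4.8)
The plan is to start from the content representation $X_\lambda-Y_\lambda=\sum_{u\in\lambda}c_u$ supplied by \eqref{XpmY} and to recognize the right-hand side as the evaluation of the first elementary symmetric function $e_1$ at the contents. The whole computation then reduces to expressing $X_n-Y_n$ and its square as Plancherel averages of the special symmetric functions $e_{(1)}$ and $e_{(1,1)}=e_1^2$, which is precisely the setting to which the degree rule recalled above for the polynomials $N_\mu$ applies.

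First I would dispose of the mean. Since $e_1(c_u:u\in\lambda)=\sum_{u\in\lambda}c_u=X_\lambda-Y_\lambda$, the average $\EE(X_n-Y_n)$ equals $N_{(1)}(n)$. As $|\mu|=1$ is odd for $\mu=(1)$, the quoted rule forces $N_{(1)}=0$, so $\EE(X_n-Y_n)=0$ and hence $\Var(X_n-Y_n)=\EE\big((X_n-Y_n)^2\big)$. (One also sees $\EE(X_n-Y_n)=0$ directly from conjugation invariance of Plancherel measure, since $c_u\mapsto-c_u$ under $\lambda\mapsto\lambda'$.) Next I would evaluate the second moment. Because $e_1^2=e_{(1,1)}$ in the multiplicative convention $e_\mu=\prod_i e_{\mu_i}$, we get
\begin{equation*}
\Var(X_n-Y_n)=\EE\big((X_n-Y_n)^2\big)=\frac1{n!}\sum_{\lambda\vdash n}f_\lambda^2\,e_{(1,1)}(c_u:u\in\lambda)=N_{(1,1)}(n).
\end{equation*}
For $\mu=(1,1)$ we have $|\mu|=2$ even and $\mu_1=1\le\tfrac{|\mu|}2=1$, so the degree rule guarantees that $N_{(1,1)}$ is a polynomial in $n$ of degree (at most) $2$.

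Finally I would pin down this quadratic by interpolation. At $n=0$ and $n=1$ the partition is forced (the empty diagram $\emptyset$ and a single box), whence $X_n-Y_n=0$ and $N_{(1,1)}(n)=0$; thus $N_{(1,1)}$ is a scalar multiple of $n(n-1)$. Evaluating at $n=2$, where $\lambda\in\{(2),(1,1)\}$ each carry Plancherel weight $\tfrac12$ and content sums $+1$ and $-1$, gives $\EE\big((X_2-Y_2)^2\big)=1$, which fixes the scalar and yields $N_{(1,1)}(n)=\tfrac{n(n-1)}2=\binom n2$. I expect no serious obstacle here; the only point requiring care is the translation of the squared content sum into the symmetric-function indexing $e_{(1,1)}$ so that the degree bound applies, together with the short numerical evaluations used for the interpolation.
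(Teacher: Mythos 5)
Your proof is correct and takes essentially the same route as the paper: both identify $\Var(X_n-Y_n)$ with the Plancherel average of $e_{(1,1)}(c_u:u\in\lambda)$, invoke the degree rule to conclude $N_{(1,1)}$ is a polynomial of degree $2$, and determine it by interpolation at $n\in\{0,1,2\}$ with values $0,0,1$. Your explicit justification of $\EE(X_n-Y_n)=0$ (via $N_{(1)}=0$, or via conjugation invariance) merely spells out what the paper asserts without comment.
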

\begin{proof}
Since $\EE(X_n-Y_n)=0$, we have
$$\Var (X_n-Y_n)=\frac1{n!}\sum_{\lambda\vdash n}f_\lambda^2\bigg(\sum_{u\in\lambda}c_u\bigg)^2=\frac{n(n-1)}2.$$
Note that here we have $F(c_u:u\in\lambda)=\big(e_1(c_u:u\in\lambda)\big)^2$, i.e.,
$\mu=(1,1)$. The polynomial $N_\mu$ is therefore of degree $2$, and it is completely determined by its values at $n\in\{0,1,2\}$, which are $N_\mu(0)=N_\mu(1)=0, N_\mu(2)=1$, proving the claim. The result
$N_\mu(n)=\frac{n(n-1)}2$ is also stated as a special case in \cite[p.\,94]{Stan10}.
\end{proof}
A workaround is needed for $\Var (X_n+Y_n)$, or even $\EE(X_n+Y_n)$,
because $X_n+Y_n$ is not a symmetric function of $\{h^2_u:u\in\lambda\}$, but only of $\{h_u:u\in\lambda\}$.
Finding a series representation $x=\sum_{k\ge0}a_kp_k(x^2)$ with  polynomials $p_k$, that holds for integers $x\ge1$, (but need not hold or even converge elsewhere) would allow, interchanging summations, to apply the polynomiality results termwise. If we are lucky --- and we are --- the polynomials $p_k$ have well known Plancherel averages.
Such kind of workaround is employed in this paper to deal with Plancherel averages
of several interesting functions of partitions, leading firstly to a representation of the
expectation as a binomial convolution, that is free of references to partitions, and can be analyzed using Rice's integral, or Poisson generating functions. In some cases holonomicity of the sequence of expectations can be inferred from the binomial convolution representation. This then allows for fast computation of many terms, that can be used to numerically confirm error terms, or conduct experiments.

The paper is organized as follows: In section~2 we consider the expected number of bumping steps in the RSK algorithm. In particular, we derive asymptotics for $\EE(X_n+Y_n)$, thus refining the result obtained by Romik~\cite{Rom05}. In section~3 we consider
$\EE\log \PP\! l^{(n)}(\lambda)$, with $\lambda$ distributed according to
Plancherel measure. From the first asymptotic terms we obtain a very explicit representation of a constant appearing in an \emph{almost equipartition property}
{\color{black}(abbreviated AEP)} for Plancherel measure, conjectured in~\cite{VeKe85}, and proven in~\cite{Buf12}. In section~4 we derive asymptotics of the expectation of the side length $D(\lambda)$ of the \emph{Durfee square} of $\lambda$, i.e., the largest square fitting in the upper left corner of the Young diagram of $\lambda$, when partitions $\lambda$ are distributed according to
Plancherel measure, see Table~\ref{tab:la4}.
Considering in section~5 more generally lengths of south-east directed cuts through the Young diagram of $\lambda$,
we enter the realm of a sequence of random curves $\psi_\lambda$ known to converge uniformly in probability to the \emph{Logan-Shepp-Vershik-Kerov limit shape curve} $\Omega$ for $|\lambda|\to\infty$.
For any fixed integer $a$ the sequence with terms $\sqrt{n}\EE\psi_\lambda\left(\frac a{\sqrt{2n}}\right)$, with expectation computed with respect to $\PP\! l^{(n)}$, turns out to be holonomic.
Experiments then strongly hint at convergence of $\EE\psi_\lambda\left(\frac {\lfloor u\sqrt{2n}\rfloor}{\sqrt{2n}}\right)\to\Omega(u)$, uniformly in $u$, and reveal that second order terms show interesting fluctuations. However we are only able to prove asymptotic results in the case of fixed $a$, i.e., in the vicinity of $u=0$. In section~6 we return to the number $Y_n$ of bumping steps, giving a heuristic argument for $\Var Y_n=\Oh(n^2)$, based on the limit shape curve.

\begin{table}
\renewcommand{\arraystretch}{1.5}
\centering
\caption{Some functions of partitions $\lambda\vdash4$ and their expectations with respect to Plancherel measure. }\label{tab:la4}
\ytableausetup{centertableaux}
\begin{tabular}{|c||c|c|c|c|c||c|}
  \hline
  $\lambda$  &  \tiny\ydiagram{4}  &   \tiny\ydiagram{3,1}  &
  \tiny\ydiagram{2,2}  &  \tiny\ydiagram{2,1,1}  &
  \tiny\ydiagram{1,1,1,1}&\EE\\[.1cm]
  \hline
  $\PP\! l^{(4)}(\lambda)$& $\frac1{24}$&$\frac3{8}$& $\frac1{6}$&$\frac3{8}$& $\frac1{24}$&\\
  $X_\lambda-Y_\lambda$& $6$&$2$& $0$&$-2$& $-6$&$0$\\
  $X_\lambda+Y_\lambda$& $6$&$4$& $4$& $4$& $6$&$\frac{25}6$\\
  $\log\PP\! l^{(4)}(\lambda)$& $\log\frac1{24}$&$\log\frac3{8}$& $\log\frac1{6}$&$\log\frac3{8}$& $\log\frac1{24}$&$\tfrac{8}3\log\frac12\!+\!\tfrac{\log3}2$\\
  $D(\lambda)$& $1$& $1$& $2$&$1$& $1$&$\frac76$\\
  \hline
\end{tabular}
\end{table}

\section{Refined asymptotics of the expected number of bumping steps in the RSK algorithm
}
Recall that $\EE (X_n+Y_n)=2\EE Y_n$ denotes twice the expected number of bumping steps of the RSK algorithm when applied to a random permutation of $\{1,\ldots,n\}$.
Romik \cite[eq.\,(1)]{Rom05} derived the following asymptotic result, $\EE Y_n\sim \frac{\color{black}128}{27\pi^2}n^{\frac32}$, and showed $Y_n/\EE Y_n\to1$ in probability.
The sequence of interest starts $$\big(\EE (X_n+Y_n)\big)_{n=1}^{10}=\big(0,1, \tfrac73, \tfrac{25}{6}, \tfrac{19}{3}, \tfrac{44}{5}, \tfrac{347}{30}, \tfrac{8181}{560}, \tfrac{541273}{30240}, \tfrac{1943453}{90720}\big).$$
The next theorem leads to a refinement of Romik's asymptotic equivalent for $\EE Y_n$.
\begin{thm}\label{t:Romik}{\textup{(Expected number of bumping steps in the RSK algorithm)}} Let $\delta_n:=\log n+2\gamma+12\log 2$, with $\gamma$ denoting Euler's constant. Then
\begin{align*}\EE (X_n+Y_n)&=\frac{256}{27\pi^2}n^{\frac32}-n
+\frac{9\delta_n-77}{9\pi^2}n^{\frac12}
+\frac{3510\delta_n-31589}{27648\pi^2}n^{-\frac12}\\
&\hphantom{==}+\frac{5565\delta_n-62224}{786432\pi^2}n^{-\frac32}
+\frac{e^8}{2^{12}\pi^{3/2}}\cos\left(8\sqrt{n}+\frac{\pi}4\right)n^{-\frac74}+\Oh\Big(n^{-\frac94}\Big).\end{align*}
\end{thm}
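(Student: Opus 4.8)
The plan is to start from the hook-length representation \eqref{XpmY}, which gives
\[
\EE(X_n+Y_n)=\frac1{n!}\sum_{\lambda\vdash n}f_\lambda^2\sum_{u\in\lambda}h_u\;-\;n,
\]
so that everything reduces to the Plancherel average $A_n:=\frac1{n!}\sum_{\lambda\vdash n}f_\lambda^2\sum_{u\in\lambda}h_u$. As the paper already flags, $\sum_u h_u$ is symmetric in the $h_u$ but not in the $h_u^2$, so Stanley's polynomiality theorem does not apply to it directly, and the announced workaround is needed.

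First I would realize that workaround concretely. For any function $g$ and any positive integer $m$, the Newton forward-difference formula gives the exact terminating identity $g(m)=\sum_{k\ge0}a_k\binom{m-1}{k}$ with $a_k=\Delta^kg(1)$; applied to $g(m)=\sqrt m$ and specialized to $m=x^2$ it yields $x=\sum_{k\ge0}a_k\binom{x^2-1}{k}$ for every integer $x\ge1$ (a finite sum, since $\binom{x^2-1}{k}=0$ once $k\ge x^2$). Each $\binom{x^2-1}{k}$ is a polynomial of degree $k$ in $x^2$, so substituting $x=h_u$, summing over boxes, and interchanging the per-box finite $k$-sum with the Plancherel average produces
\[
A_n=\sum_{k\ge0}a_k\,N_k(n),\qquad N_k(n):=\frac1{n!}\sum_{\lambda\vdash n}f_\lambda^2\sum_{u\in\lambda}\binom{h_u^2-1}{k}.
\]
By Stanley's theorem each $N_k(n)$ is a polynomial in $n$, which I would make explicit. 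The anchor cases are $N_0(n)=n$ and, using the identity $\sum_{u\in\lambda}h_u^2=\sum_{u\in\lambda}c_u^2+|\lambda|^2$ together with the companion content average $\frac1{n!}\sum_\lambda f_\lambda^2\sum_u c_u^2=\binom n2$ (obtained exactly as in Lemma~\ref{lem:bigOm}), $N_1(n)=3\binom n2$; the higher $N_k$ follow from the analogous content power-sum averages. The expected outcome is a clean closed form of the shape $N_k(n)=c_k\binom{n}{k+1}$, turning $A_n=\sum_k a_kc_k\binom{n}{k+1}$ into exactly the promised binomial-convolution representation free of partitions.

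With that representation in hand, the asymptotic analysis is the heart of the matter. I would pass to the Poisson generating function $\tilde A(z)=e^{-z}\sum_n A_n\frac{z^n}{n!}=\sum_k a_kc_k\frac{z^{k+1}}{(k+1)!}$; because the $a_k=\Delta^k\!\sqrt{\cdot}\,(1)$ are the difference coefficients of $\sqrt{\cdot}$ (with exponential generating function $e^{-w}\sum_{j\ge0}\sqrt{j+1}\,\tfrac{w^j}{j!}$) and the $c_k$ are explicit, I expect $\tilde A(z)$ to collapse to a closed form built from Bessel-type functions of $\sqrt z$. Reading off the large-$z$ expansion of that closed form and then applying analytic de-Poissonization transfers the expansion to $A_n$; subtracting $n$ and regrouping against the definition of $\delta_n$ should reproduce the stated series, with the $\log n$ entering through the asymptotics of $\Delta^k\!\sqrt{\cdot}$ and the constants $\gamma$ and $12\log2$ through the subleading Bessel and de-Poissonization data. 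I would also use holonomicity of $(A_n)$, read off from the binomial convolution, to generate many exact terms and numerically confirm every coefficient and the claimed error order.

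The main obstacle I anticipate is the oscillatory term $\frac{e^8}{2^{12}\pi^{3/2}}\cos\!\big(8\sqrt n+\tfrac\pi4\big)n^{-7/4}$. It sits below several of the polynomial-in-$\sqrt n$ terms, so it is invisible to any purely residue-based reading of the binomial convolution and can only come from the genuinely oscillatory part of $\tilde A(z)$ --- for instance from products of Bessel functions whose individual oscillations at frequency $4\sqrt z$ combine to frequency $8\sqrt z$. Pinning down its three data simultaneously --- the frequency $8$, the phase $\tfrac\pi4$, and the amplitude $e^8/(2^{12}\pi^{3/2})$, where the factor $e^8$ presumably arises as the real part of the phase at a complex saddle of the de-Poissonization integral --- and showing that de-Poissonization preserves rather than smears this exponentially subtle contribution, is the delicate step; controlling it uniformly down to the stated $\Oh(n^{-9/4})$ is what elevates the theorem above a routine singularity analysis.
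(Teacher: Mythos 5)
Your overall architecture mirrors the paper's: write $\EE(X_n+Y_n)$ as the Plancherel average of $\sum_u(h_u-1)$, expand $x$ for integer $x\ge1$ in a polynomial basis in $x^2$, push the expansion through a termwise polynomiality result to get a binomial convolution, and extract asymptotics via Poisson generating functions and depoissonization (the paper's main proof uses Rice's integral instead, but it explicitly notes the Poisson/hypergeometric route as an alternative proof, so that part of your plan is sound). However, there is a genuine gap at the load-bearing step: your claim that
$$N_k(n):=\frac1{n!}\sum_{\lambda\vdash n}f_\lambda^2\sum_{u\in\lambda}\binom{h_u^2-1}{k}=c_k\binom{n}{k+1}$$
is false for $k\ge2$. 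Already at $k=2$, $n=2$: both partitions of $2$ have hook multiset $\{2,1\}$, so $\sum_u\binom{h_u^2-1}{2}=\binom{3}{2}=3$ and $N_2(2)=3$, whereas $c_2\binom{2}{3}=0$. (One can check $N_2(0)=N_2(1)=0$, $N_2(2)=3$, $N_2(3)=29$, so $N_2$ is a cubic not divisible by $\binom n3$.) The coincidence you observed at $k=0,1$ happens only because your basis agrees with the paper's basis $p(x,r)=\prod_{i=1}^r(x^2-i^2)$ of \eqref{e:p} up to degree one: $\binom{x^2-1}{1}=p(x,1)$, but $2\binom{x^2-1}{2}=(x^2-1)(x^2-2)\neq p(x,2)=(x^2-1)(x^2-4)$.

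The structural reason the paper's basis works and yours does not: $p(h,r)$ vanishes for every integer hook length $h\le r$, so for $n\le r$ the statistic $\sum_u p(h_u,r)$ vanishes identically, forcing the average polynomial to have roots at $n=0,1,\dots,r$ and hence be a multiple of $\binom{n}{r+1}$; Panova's theorem \eqref{e:pan} supplies the constant $K_r$. Your $\binom{h^2-1}{k}$ does not vanish on small hooks (e.g.\ $\binom{3}{2}=3$ at $h=2$, $k=2$), so no such divisibility holds, and Stanley's theorem gives only polynomiality, not a closed form. Without clean closed forms for \emph{all} $k$, the coefficients of your binomial convolution are unknown, the Poisson generating function has no Bessel/hypergeometric closed form, and the entire asymptotic program — including the delicate oscillatory term, whose difficulty you correctly diagnose — cannot start. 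The fix is exactly the paper's trade-off: use the basis \eqref{e:p} with its clean averages \eqref{e:pan}, at the cost of having to prove the (no longer automatic) expansion identity \eqref{e:id}, which the paper does in the appendix via finite differences; your Newton-series identity is trivially true but attached to the wrong basis.
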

\begin{proof}
As $X_n+Y_n$ is not a symmetric polynomial of the multiset $\{h_u^2:u\in\lambda\}$, but only of the multiset $\{h_u:u\in\lambda\}$, we can not expect $\EE (X_n+Y_n)$ to be a polynomial in $|\lambda|$. Indeed, by Romik's result, $\EE (X_n+Y_n)=\Theta(n^\frac32)$ is definitely not a polynomial.
However, we can invoke polynomiality results via the following identity. Using
\begin{equation}
p(x,r):=\prod_{i=1}^r(x^2-i^2),\label{e:p}\end{equation}
the equation
\begin{equation}x=1+\sum_{r=1}^\infty \binom{2r}{r}\frac{(-1)^{r}}{(1-2r)(2r+1)!}p(x,r)\label{e:id}\end{equation}
holds for $x\in\mathbb{N}:=\{1,2,3,\ldots\}$. This will be proved in the appendix.

Now, by \cite[Thm.\,1]{Pan12}, we have
\begin{equation}\frac1{n!}\sum_{\lambda\vdash n}f_\lambda^2\sum_{u\in\lambda}p(h_u,r)=K_r\binom{n}{r+1},\label{e:pan}\end{equation}
with $K_r=\frac{(2r)!(2r+1)!}{(r+1)!^2r!}$,
leading to
\begin{align*}\EE\,(X_n&+Y_n)=\frac1{n!}\sum_{\lambda\vdash n}f_\lambda^2\sum_{u\in\lambda}(h_u\!-\!1)
\overset{\eqref{e:id}}{=}\frac1{n!}\sum_{\lambda\vdash n}f_\lambda^2\sum_{u\in\lambda}\sum_{r=1}^\infty \binom{2r}{r}\frac{(-1)^{r}}{(1-2r)(2r+1)!}p(h_u,r)\\
&\overset{\eqref{e:pan}}{=}\sum_{r=1}^\infty \binom{2r}{r}\frac{(-1)^{r}}{(1-2r)(2r+1)!}K_r\binom{n}{r+1}\\
&=\sum_{r=1}^\infty\binom{2r}{r}^2\frac{(-1)^{r+1}}{(2r-1)(r+1)(r+1)!}\binom{n}{r+1}
=\sum_{r=2}^n\binom{2r-2}{r-1}^2\frac{(-1)^{r}}{(2r^2-3r)r!}\binom{n}{r}\\
&=(-1)^n\frac{n!}{2\pi \i}\oint_C\frac{f(z)}{z(z-1)(z-2)\cdots(z-n)}dz,
\end{align*}
where $C$ is a contour that encircles integers $2,3,\ldots,n$, but neither
any other integers, nor poles of $f$, which is given by
$$f(z):=\frac{\Gamma^2(2z-1)}{z^2(2z-3)\Gamma^5(z)}.$$
For the method of evaluating a large finite difference via the so-called \emph{Rice's integral} used above see the article \cite{FlS95}.
By computing (leading asymptotic terms of) residues of $g_n(z):=f(z)\frac{n!\Gamma(-z)}{\Gamma(n+1-z)}$
at $\pm\frac32,1,\pm\frac12$, (note that $g_n(z)$ is analytic for
$z\in\{-1,0\}$)
we obtain
\begin{align*}
\EE\,(X_n+Y_n)=&\,\frac{256 n^{\frac32}}{27\pi^2}\left(1-\frac{3}{8n}
-\frac{7}{128n^2}-\frac{9}{1024n^3}\right)\\
&-n\\
&+\delta_n\frac{n^{\frac12}}{\pi^2}\Big(1+\frac{1}{8n}+\frac{1}{128n^2}\Big)-
\frac{5 n^{\frac12}}{\pi^2}\left(1+\frac{1}{8n}
-\frac{1}{1920n^2}\right)\\
&+\delta_n\frac{n^{-\frac12}}{512\pi^2}\Big(1-\frac{3}{8n}\Big)
+\frac{n^{-\frac12}}{1024\pi^2}\left(1+\frac{13}{8n}\right)\\
&-\delta_n\frac{n^{-\frac32}}{2^{18}\pi^2}
+\frac{n^{-\frac32}}{2^{14}\!\cdot\!3\pi^2}\\
&+\Oh\left(n^{-\frac52}\log n\right)+\frac{1}{2\pi \i}\oint_{C'}f(z)\frac{n!\Gamma(-z)}{\Gamma(n+1-z)}dz
,
\end{align*}
where we recall $\delta_n=\log n+2\gamma+12\log 2,$ and
where $C'$ encircles the interval $[-\frac32,n]$, but no poles of the integrand outside that interval. This integral is taken care for in the appendix, yielding the contribution involving $\cos(8\sqrt{n}+\frac\pi4)$. Such a term is not completely uncommon, see e.g. the example at the end of \cite[sec.\,5]{FlS95}. Slightly rearranging the terms completes the proof.
\end{proof}

\subsection{Holonomicity of the sequence $(\EE(X_n+Y_n)+n)_{n\in\mathbb{N}}$}
 Defining
$$u_n:=\sum_{r=1}^n\binom{2r-2}{r-1}^2\frac{(-1)^{r}}{(2r^2-3r)r!}\binom{n}{r},$$
we have $\EE (X_n+Y_n)=u_n-n$. The sequence $(u_n)$ satisfies the linear recurrence relation
$$u_{n+4}
=\tfrac{4n^3+23n^2+63n+78}{(n+3)(n+4)^2}u_{n+3}
-\tfrac{(2n-1)(3n+2)}{(n+4)^2}u_{n+2}
+\tfrac{(4n-7)(n+2)}{(n+4)^2}u_{n+1}
-\tfrac{(n+1)(n+2)}{(n+4)^2}u_n
,$$
with initial conditions
$$u_0=0,\quad u_1=1,\quad u_2=3,\quad u_3=\frac{16}3.$$
Clearly, the terms $\frac{u_n}{n!}$ comprise a sequence, that is the convolution of two
sequences that are obviously holonomic. For said convolution the \emph{gfun} package \cite{SZ94} then easily produces a recursion.

For the Poisson generating function
$U(z):=e^{-z}\sum_{n\ge0}\frac{u_n}{n!}z^n$ we obtain
$$U(z)=-z\sum_{k\ge0}\binom{2k}{k}^2\frac{(-z)^k}{(k+1)(2k-1)(k+1)!^2}=z\,\setlength\arraycolsep{1pt}
{}_2 F_3\left[\begin{matrix}-\frac12,\,
\frac12~\\2,\,2,\,2\end{matrix};-16z\right],$$
a hypergeometric function that may also be used to recover
asymptotics of $u_n$, see
\cite[Sec.\,5.11.2]{Luk69} for asymptotic expansions of generalized hypergeometric functions.
Indeed, the leading terms of an asymptotic expansion of $U(z)$, provided by Maple, together with Depoissonization via the saddle point method, yield
an alternative proof of Theorem~\ref{t:Romik}.

Note that the recurrence relation allows for easily computing millions of terms of the sequence $(\EE(X_n+Y_n))_{n\ge1}$, which can be used to numerically confirm the error term in Theorem~\ref{t:Romik}.

\section{The constant appearing in the AEP for Plancherel measure}
We consider the random variables $Z_n=Z_n(\lambda):=\sum_{u\in\lambda}\log h_u$, where $\lambda\vdash n$ is distributed according to the Plancherel measure, and denote $z_n:=\EE Z_n$.
The sequence starts
\begin{align*}(z_1,\ldots,z_{5})&=(0,~\log2,~\tfrac{\log2}3\!+\!\log3,~\tfrac{17}6\log2\!+\!\tfrac{\log3}4,~\tfrac{13}6\log2\!+\!\tfrac7{10}\log3\!+\!\tfrac7{12}\log5)\\
&\approx(0,~ 0.6931471806,~ 1.329661349,~ 2.238570083,~ 3.209686276)
\end{align*}
The first few asymptotic terms of
$z_n$ will lead to a representation of the constant $H$, conjectured by Vershik and Kerov \cite{VeKe85} to exist as the limit in probability of random variables $-\frac1{\sqrt{n}}\log \PP\! l^{(n)}(\lambda)$, where $\PP\! l^{(n)}(\lambda):=n!\left(\prod_{u\in\lambda}\frac1{h_u}\right)^2$, with $\lambda\vdash n$ again distributed according to the Plancherel measure.
A strengthening of the conjecture (convergence in $L_p$ for $p<\infty$) has been proved by Bufetov \cite{Buf12}, from which we borrowed above notation,
and an expression for $H$ in terms of a threefold integral has been given
in \cite[eq.\,(15)]{Buf12}. We aim here at a less involved representation of $H$, and at more terms of an asymptotic expansion
of $\EE[n^{-\frac12}(2Z_n-\log n!)]$.
\begin{thm}\label{t:Buf} Let {\color{black} $H_n=1+\frac12+\cdots+\frac1n$} denote the $n$th harmonic number. Then, as $n\to\infty$, we have
$$
-\frac{\EE\log \PP\! l^{(n)}(\lambda)}{\sqrt{n}}=H-\left(\frac{13}{24}\log n+\frac{13\gamma}{12}+\log\sqrt{2\pi}+\frac14-h'(0)\right)\frac1{\sqrt{n}}+o\big(n^{-\frac12}\big),
$$
where
\begin{align*}H&=\frac{16}{3\pi^2}(4\gamma+1)+\frac{64}{\pi^2}
\sum_{\ell\ge2}\frac{\ell^2}{4\ell^2-1}\Big(\log \ell-H_\ell+\gamma+\frac1{2\ell}\Big)
\\
&\approx1.87702830628\end{align*}
and
$$h'(0)=\sum_{\ell\ge2}\ell\left(H_\ell-\log \ell-\gamma- \frac1{2\ell}+\frac1{12\ell^2}\right)\approx0.001562493.$$
\end{thm}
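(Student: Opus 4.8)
The plan is to follow exactly the same machinery that worked for Theorem~\ref{t:Romik}, but applied to the function $Z_\lambda=\sum_{u\in\lambda}\log h_u$ instead of $\sum_{u\in\lambda}(h_u-1)$. The obstacle, as the paper's introduction anticipates, is that $\log h_u$ is not a symmetric function of $\{h_u^2:u\in\lambda\}$, so Stanley's polynomiality theorem does not apply directly. I would therefore seek a ``workaround'' expansion of the form $\log x=\sum_{r\ge0}a_r\,p(x,r)$ valid for $x\in\mathbb{N}$, where $p(x,r)=\prod_{i=1}^r(x^2-i^2)$ is the same building block defined in \eqref{e:p}. The key point is that each $p(x,r)$ is a symmetric polynomial in $x^2$, so by Pan's result \eqref{e:pan} its Plancherel average equals $K_r\binom{n}{r+1}$ with $K_r=\frac{(2r)!(2r+1)!}{(r+1)!^2r!}$. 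Finding the coefficients $a_r$ is the analytic heart of the argument: I expect them to arise from expanding $\log x$ in the basis $\{p(x,r)\}_r$, most naturally via a Newton-type finite-difference interpolation in the variable $x^2$, evaluating $\log\sqrt{k^2}=\log k$ at the nodes $x=1,2,3,\dots$, giving $a_r$ as an alternating sum of logarithms whose continuous interpolation will involve the digamma/Gamma function and hence Euler's constant $\gamma$ and terms like $H_\ell-\log\ell$.

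Once the expansion is in hand, I would interchange the Plancherel averaging with the sum over $r$ to obtain
\begin{equation*}
\EE Z_n=\sum_{r\ge0}a_r K_r\binom{n}{r+1},
\end{equation*}
a binomial convolution free of any reference to partitions, structurally identical to the one produced in the proof of Theorem~\ref{t:Romik}. From here the strategy bifurcates along the two routes the paper has already advertised. For the asymptotic expansion of $\EE[n^{-1/2}(2Z_n-\log n!)]$ I would rewrite the convolution as a Rice integral $\frac{(-1)^n n!}{2\pi\i}\oint_C \frac{\varphi(z)}{z(z-1)\cdots(z-n)}\,dz$ for an appropriate meromorphic $\varphi$ built from $\Gamma$-factors (coming from $K_r$) and the interpolated coefficients $a_r$, then push the contour out and collect residues. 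The leading residue should produce the constant $H$, and the subleading residues near $z=0$ and the half-integers should yield the $\frac{13}{24}\log n$, the $\gamma$-terms, $\log\sqrt{2\pi}$, $\frac14$, and the correction $h'(0)$ appearing in the theorem.

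The most delicate bookkeeping, and what I expect to be the main obstacle, is pinning down the constant $H$ itself in the closed form
\begin{equation*}
H=\frac{16}{3\pi^2}(4\gamma+1)+\frac{64}{\pi^2}\sum_{\ell\ge2}\frac{\ell^2}{4\ell^2-1}\Big(\log\ell-H_\ell+\gamma+\frac1{2\ell}\Big).
\end{equation*}
This requires that the dominant contribution to the Rice integral, after the $n^{1/2}$ normalization, converges to a single residue whose value reassembles precisely into this series over $\ell$; the factor $\frac{\ell^2}{4\ell^2-1}$ strongly suggests that the asymptotics of $K_r\binom{n}{r+1}$ (equivalently the $\Gamma$-ratio $\frac{\Gamma^2(2z-1)}{\Gamma^5(z)}$-type kernel) are being resummed against the interpolated coefficients, and the combination $\log\ell-H_\ell+\gamma+\frac1{2\ell}$ is exactly the tail of the Stirling/digamma expansion that guarantees the series converges. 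I would verify convergence of this $\ell$-series (the summand is $\Oh(\ell^{-2})$, so absolute convergence is clear once the Stirling cancellation is confirmed), identify $h'(0)$ as the value at $0$ of the derivative of the meromorphic interpolation $h$ of the coefficient sequence, and finally cross-check the numerical values $H\approx1.877$ and $h'(0)\approx0.00156$ against the finitely many exact values $z_1,\dots,z_5$ listed before the statement, together with the $\EE$ column of Table~\ref{tab:la4}, to confirm no constant has been dropped.
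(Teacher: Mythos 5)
Your proposal follows essentially the same route as the paper's proof: your Newton/divided-difference interpolation of $\log x$ in the variable $x^2$ at the nodes $1,2,3,\ldots$ produces exactly the same coefficients in the $p(x,r)$ basis that the paper derives via its Kronecker-delta identity \eqref{e:kron}, after which both arguments average termwise using \eqref{e:pan}, convert the binomial convolution to a Rice integral built on a meromorphic extension of the coefficient sequence, and extract $H$ and $h'(0)$ from the residues (at $z=\tfrac12$ and $z=0$ respectively), with the $\ell$-series for $H$ arising precisely as you predict from the Stirling-tail combination $\log\ell-H_\ell+\gamma+\frac1{2\ell}$. The approach and the key technical points (termwise averaging, meromorphic continuation splitting off the convergent series $h$, residue bookkeeping against Stirling's expansion of $\log n!$) coincide with the paper's.
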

\begin{proof}
As we will prove in the appendix, the Kronecker delta defined on $\mathbb{N}\times\mathbb{N}$  can be expressed in terms of the polynomials $p(x,r)$ given in \eqref{e:p} as follows,
\begin{equation}\delta_{\ell,n}=\sum_{r=\ell-1}^\infty (-1)^{\ell+r+1}\frac{2\ell^2}{(r+\ell+1)!(r-\ell+1)!}p(n,r).\label{e:kron}\end{equation}
From this we deduce
$$\log n=\sum_{\ell\ge2}\log \ell\sum_{r\ge\ell-1}\frac{2(-1)^{\ell+r+1}\ell^2}{(r+\ell+1)!(r-\ell+1)!}p(n,r)=2\sum_{r\ge2}(-1)^{r}g(r)p(n,r-1),$$
for $n\in\mathbb{N}$, where $g$ is given by
$$g(r)=\sum_{\ell=2}^r\frac{(-1)^{\ell}\ell^2\log \ell}{\Gamma(r+\ell+1)\Gamma(r-\ell+1)}.$$
We want to extend $g$ to a meromorphic function in the right halfplane $\Re r>-1$.
Therefore we employ
$$\log \ell=H_\ell-\gamma-\frac1{2\ell}+\frac1{12\ell^2}+\Oh(\ell^{-4}),$$
and the identities (all with easy proofs, only the last one is proven in the appendix)
\begin{subequations}\begin{align}\sum_{\ell=2}^r\frac{(-1)^{\ell}\ell^2}{\Gamma(r+\ell+1)\Gamma(r-\ell+1)}&=\frac1{\Gamma(r)\Gamma(r+2)}\\
\sum_{\ell=2}^r\frac{(-1)^{\ell}\ell}{\Gamma(r+\ell+1)\Gamma(r-\ell+1)}&=\frac{3(r-1)}{2(2r-1)\Gamma(r)\Gamma(r+2)}\\
\sum_{\ell=2}^r\frac{(-1)^{\ell}}{\Gamma(r+\ell+1)\Gamma(r-\ell+1)}&=\frac {r-1}{2r\Gamma(r)\Gamma(r+2)}\\
\sum_{\ell=2}^r\frac{(-1)^{\ell}\ell^2(H_\ell-1)}{\Gamma(r+\ell+1)\Gamma(r-\ell+1)}&
=\frac{1}{4(r-1)(2r-1)\Gamma(r)^2}.\label{e:numberd}\end{align}\end{subequations}
By Euler's reflection formula, for complex $r\not\in\mathbb{Z}$ and for real $\ell\to\infty$ we have
$$\Gamma(r+\ell+1)\Gamma(r-\ell+1)\frac{\sin\pi(\ell-r)}{\pi}=
\frac{\Gamma(r+\ell+1)}{\Gamma(\ell-r)}
\sim\ell^{2r+1}.
$$
Therefore the following series
$$h(r):=\sum_{\ell\ge2}(-1)^{\ell}\ell^2\frac{\log \ell-H_\ell+\gamma+\frac1{2\ell}-\frac1{12\ell^2}}{\Gamma(r+\ell+1)\Gamma(r-\ell+1)}$$
converges for $\Re r>-1$, and satisfies $h(1)=h(0)=0$, with
$h'(0)$ as given in the theorem.
Hence
$$g(z)=h(z)+\frac{1-\gamma}{\Gamma(z)\Gamma(z+2)}+\frac{1}{4(z-1)(2z-1)\Gamma(z)^2}
-\frac{(z-1)(16z+1)}{24z(2z-1)\Gamma(z)\Gamma(z+2)}
$$
is the sought extension, meromorphic for $\Re z>-1$. Now
\begin{align*}z_n=\frac1{n!}\sum_{\lambda\vdash n}f_\lambda^2\sum_{u\in\lambda}\log h_u&=2\sum_{r\ge2}(-1)^{r}g(r)\frac1{n!}\sum_{\lambda\vdash n}f_\lambda^2\sum_{u\in\lambda}p(h_u,r-1)\\
&=2\sum_{r\ge2}(-1)^{r}g(r)K_{r-1}\binom{n}{r}\\
&=(-1)^n\frac{n!}{2\pi \i}\oint_C\frac{\phi(z)}{z(z-1)(z-2)\cdots(z-n)}dz,
\end{align*}
where
$$
\phi(z)=2g(z)\frac{\Gamma(2z)\Gamma(2z-1)}{\Gamma(z+1)^2\Gamma(z)}.
$$
Here $C$ is a contour that encircles integers $2,\ldots,n$, but neither
any other integers, nor poles of $\phi$.

By computing (leading asymptotic terms of) residues of $\Phi_n(z):=\phi(z)\frac{n!\Gamma(-z)}{\Gamma(n+1-z)}$
at $1,\,\frac12$, and $0$,
we obtain
\begin{align*}
z_n=&\,\frac{n\log n-n}2-\frac14
+\frac{4}{9\pi^2}\Big(24\gamma+7-18\pi h(\tfrac12)\Big)n^{\frac12}\\
&-\frac{\log n}{48}-\frac{13\gamma}{24}+\frac18+\frac{h'(0)}2
+\Oh(n^{-\frac12})+\frac{1}{2\pi \i}\oint_{C'}\Phi_n(z)dz,
\end{align*}
where $C'$ encircles the interval $[0,n]$, but no poles of the integrand outside that interval. As shown in the appendix, the latter integral is $o(1)$, thus we arrive at
\begin{align*}-\EE\frac{\log \PP\! l^{(n)}(\lambda)}{\sqrt{n}}&=\EE\frac{2Z_n-\log n!}{\sqrt{n}}\\
&=\frac{8}{9\pi^2}\Big(24\gamma+7-18\pi h(\tfrac12)\Big)
-\frac{13}{24}\frac{\log n}{\sqrt{n}}\\
&\ \ \ \ -\left(\frac{13\gamma}{12}+\log\sqrt{2\pi}+\frac14-h'(0)\right)\frac1{\sqrt{n}}
+o\big(n^{-\frac12}\big).
\end{align*}
Finally, for the evaluation of $h(\frac12)$, we use $\Gamma(\frac12+\ell+1)\Gamma(\frac12-\ell+1)=(-1)^{\ell+1}\frac\pi4(4\ell^2-1)$,
as well as $\sum_{\ell\ge2}(4\ell^2-1)^{-1}=\frac16$, leading to
$$h(\tfrac12)=-\frac{4}{\pi}\sum_{\ell\ge2}\frac{\ell^2}{4\ell^2-1}\Big(\log \ell-H_\ell+\gamma+\frac1{2\ell}\Big)+\frac1{18\pi},$$
which completes the proof.
\end{proof}
\begin{table}
\renewcommand{\arraystretch}{1.35}
\centering
\caption{Some terms of the sequence approaching $H$. }\label{tab:H}
\begin{tabular}{|c|c|c|c|c|c|}
  \hline
 $n$ & $\frac{1}{\sqrt{n}}(2z_n-\log n!)$& $n$ & $\frac{1}{\sqrt{n}}(2z_n-\log n!)$ &$n$ & $\frac{1}{\sqrt{n}}(2z_n-\log n!)$\\
  \hline
  $2$& $0.4901290717$&$7$& $0.8208116414$&$128$& $1.4880650932$\\
  $3$& $0.5008878635$&$8$& $0.8690239552$&$256$& $1.5781760349$\\
  $4$& $0.649543169$&$16$& $1.0657023619$& $512$& $1.6489336120$\\
  $5$& $0.7297992837$&$32$& $1.2347905493$&$1024$& $1.7039138626$\\
  $6$& $0.7726513179$& $64$& $1.3748129422$&$2048$& $1.7462734777$\\
  \hline
\end{tabular}
\end{table}
\begin{rem}
Note that the term $2\sum_{r\ge2}(-1)^{r}g(r)K_{r-1}\binom{n}{r}$ can be used
to compute $z_n$ for values of $n$ so large that naively generating all partitions
$\lambda\vdash n$ is not an option. Of course, care has to be taken, since cancellations will occur in numerical computations because of alternating signs of summands. Table~\ref{tab:H} shows that
$\frac{1}{\sqrt{n}}(2z_n-\log n!)$ is slowly approaching $H$ from below, with
the values obtained in \cite[Table\,1]{VePa10} from simulations fitting neatly into this pattern.
The convergence rate is in good accordance with the error term given in Theorem~\ref{t:Buf}.
Observe that $\frac{13\gamma}{12}+\log\sqrt{2\pi}+\frac14-h'(0)\approx1.792693.$ For $n=2048$ we then get $H-(\frac{13}{24}\log{2048}+1.792693)/\sqrt{2048}\approx1.746154$,
which matches the table entry fairly well.
\end{rem}

\section{The expected side length of the Durfee square}
Here we consider the side length of the Durfee square of partition $\lambda$,
$$D(\lambda):=\max\{i:\lambda_i\ge i\},$$ and we denote the restriction of
$D(\lambda)$ to $\lambda\vdash n$ distributed according to Plancherel measure by $D_n$.
With respect to uniform measure, where all partitions $\lambda\vdash n$ are equally likely, the expectation and the most likely value of
$D(\lambda)$ have been studied in \cite{CCS98,Can05,Mut02}. Regarding Plancherel measure, it is known since the days of the limit shape theorem (see Theorem~\ref{VKLS} in the next section) that $\frac1{\sqrt{n}}D_n\to\frac2\pi$ in probability. Furthermore, we may deduce from
\cite[Thm.\,3.6]{Bog07}
convergence in distribution of  $\frac{\pi}{\sqrt{\log n}}\left(D_n-\frac2\pi \sqrt{n}\right)$ to a standard normal random variable. Should that convergence in distribution be accompanied by convergence of second moments,
$\EE D_n=\frac2\pi \sqrt{n}+\Oh\left(\sqrt{\log n}\right)$ would follow.
We are not aware of a proof of such result, let alone of any results in the literature regarding fine asymptotics of $\EE D_n$.

\begin{thm}\label{t:Durf} Let $d_n:=\EE D_n$. Then, as $n\to\infty$, we have
$$d_n=\frac2\pi\sqrt{n}+\left(\frac{3}{16\pi}
-\frac{e^2}{8\pi}\sin\left(4\sqrt{n}\right)\right)\frac1{\sqrt{n}}+\Oh\big(n^{-1}\big).$$
\end{thm}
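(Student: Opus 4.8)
The plan is to treat $d_n=\EE D_n$ by the same mechanism as Theorems~\ref{t:Romik} and \ref{t:Buf}: write the statistic as a sum over boxes of a function of a single content, expand that function in the content building blocks $p(\cdot,r)$ from \eqref{e:p}, apply a content analogue of Panova's average \eqref{e:pan} termwise, and read off a binomial convolution to be analysed by Rice's integral. The starting point is that the box $(i,i)$ lies in $\lambda$ iff $\lambda_i\ge i$, so the side length of the Durfee square equals the number of boxes of content $0$; hence
\[
D(\lambda)=\#\{u\in\lambda:c_u=0\}=\sum_{u\in\lambda}\delta_{0,c_u}.
\]
Everything then hinges on a usable expansion of $\delta_{0,c}$, as a function of the integer $c$, in the symmetric building blocks of the contents.

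There are two algebraic inputs. First I would establish the content-$0$ detector
\[
\delta_{0,c}=\sum_{r\ge0}\frac{(-1)^{r}}{(r!)^2}\,c^2\,p(c,r-1)
=\sum_{r\ge0}\frac{(-1)^{r}}{(r!)^2}\prod_{i=0}^{r-1}(c^2-i^2),\qquad c\in\mathbb{Z},
\]
(the product being empty for $r=0$). For a fixed integer $c=m$ the sum terminates at $r=m$, and writing $\prod_{i=0}^{r-1}(m^2-i^2)=\frac{m!}{(m-r)!}\frac{(m+r-1)!}{(m-1)!}$ turns it into a terminating ${}_2F_1(-m,m;1;1)$, which by Gauss's summation theorem equals $\Gamma(1)^2/\big(\Gamma(1+m)\Gamma(1-m)\big)=\delta_{0,m}$; this is the content counterpart of \eqref{e:kron} and I would record it in the appendix. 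Second I need the content analogue of \eqref{e:pan},
\[
\frac1{n!}\sum_{\lambda\vdash n}f_\lambda^2\sum_{u\in\lambda}c_u^2\,p(c_u,r-1)=\frac{(2r)!}{(r+1)!}\binom{n}{r+1}.
\]
This I would prove in the self-contained spirit of Lemma~\ref{lem:bigOm}: by Stanley's polynomiality theorem the left side is a polynomial in $n$, of degree $\le r+1$ (the degree of the content power-sum average $\EE\sum_u c_u^{2r}$), and it vanishes for $n=0,1,\dots,r$ because every diagram of at most $r$ boxes has all contents in $\{-(r-1),\dots,r-1\}$, where $\prod_{i=0}^{r-1}(c^2-i^2)$ is zero; hence it is a constant multiple of $\binom{n}{r+1}$. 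Evaluating at $n=r+1$, only the partitions $(r+1)$ and $(1^{r+1})$ contribute (they alone own a box of content $\pm r$), each with $f_\lambda^2=1$ times $\prod_{i=0}^{r-1}(r^2-i^2)=r\,(2r-1)!$, giving the constant $\frac{2r\,(2r-1)!}{(r+1)!}=\frac{(2r)!}{(r+1)!}$. (Alternatively one may quote the hook-content summations of \cite{Pan12} directly.)

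Combining the two inputs and interchanging the (for each box finite) $r$-sum with the Plancherel average yields
\[
d_n=\sum_{r\ge0}\frac{(-1)^{r}}{r+1}\binom{2r}{r}\frac1{r!}\binom{n}{r+1},
\]
which already reproduces $d_1=d_2=d_3=1$ and $d_4=\tfrac76$. Rewriting this alternating sum as a Rice integral gives
\[
d_n=(-1)^{n+1}\frac{n!}{2\pi\i}\oint_C\frac{\varphi(z)}{z(z-1)\cdots(z-n)}\,dz,\qquad
\varphi(z)=\frac{\Gamma(2z-1)}{z\,\Gamma(z)^3},
\]
with $C$ encircling $1,\dots,n$ but no pole of $\varphi$. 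Pushing $C$ out to a contour $C'$ around $[-\tfrac12,n]$ and collecting residues of $\varphi(z)\frac{n!\,\Gamma(-z)}{\Gamma(n+1-z)}$, the dominant contribution is the \emph{simple} pole of $\Gamma(2z-1)$ at $z=\tfrac12$, which (using $\frac{n!}{\Gamma(n+1-z)}\sim n^{z}$ with its $1/n$-corrections, exactly as in Theorem~\ref{t:Romik}) produces $\frac2\pi n^{1/2}$ and part of the $n^{-1/2}$ term; at $z=0$ the three simple poles of $\Gamma(2z-1)$, $\Gamma(-z)$ and $1/z$ are killed by the triple zero of $\Gamma(z)^{-3}$, so the integrand is regular there and there is \emph{no} constant term; and the simple pole at $z=-\tfrac12$ supplies the rest of the $n^{-1/2}$ coefficient, the two sources combining to $\frac{3}{16\pi}$.

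The main obstacle is the remaining integral over $C'$, which is not a residue and carries the oscillatory term $-\frac{e^2}{8\pi}\sin\!\big(4\sqrt n\big)\,n^{-1/2}$. As in the last step of the proof of Theorem~\ref{t:Romik}, I would estimate it by the saddle-point method: the duplication formula gives $\varphi(z)=\frac{2^{2z-2}}{\sqrt\pi}\frac{\Gamma(z-1/2)}{z\,\Gamma(z)^2}$, so the integrand behaves like $4^{z}n^{z}$ times ratios of Gamma functions, and balancing these produces a complex-conjugate pair of saddles at height $\asymp\sqrt n$ whose contribution is an oscillation of frequency $4\sqrt n$ and amplitude $\propto e^{2}n^{-1/2}$; the lower frequency here (versus $8\sqrt n$ in Theorem~\ref{t:Romik}) reflects the single factor $\Gamma(2z-1)$ in $\varphi$ rather than its square. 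Pinning down the constants $e^2$, $4$ and $\tfrac1{8\pi}$, and the phase that turns a cosine into $\sin(4\sqrt n)$, is the delicate part; I would relegate it to the appendix, strictly parallel to the $\cos(8\sqrt n+\tfrac\pi4)$ term of Theorem~\ref{t:Romik}, and use the holonomy of $(d_n)$ implied by the convolution (cf.\ the recurrence derived in \S2) to confirm the error term numerically.
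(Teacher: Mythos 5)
Your algebraic phase coincides with the paper's: your content-zero detector is exactly the $\ell=0$ case of the paper's identity \eqref{e:kron2} (the factor $2-\delta_{0,n}$ there is invisible when $\ell=0$, since the sum itself vanishes for $n\ge1$), your content average is the paper's \eqref{e:fuj}, and your binomial convolution is precisely \eqref{e:durfsum} after re-indexing. Where you genuinely diverge is the analytic phase: the paper does \emph{not} use Rice's integral for this theorem. It forms the Poisson generating function $D(z)=e^{-z}\sum_{n\ge0}\frac{d_n}{n!}z^n$, recognizes it as an explicit combination of the Bessel functions $J_0,J_1$, imports their tabulated asymptotic expansions from \cite{Luk69} --- which is where the $\sin(4\sqrt z)$ term arrives with its exact constants --- and then depoissonizes by a saddle point at $z=n$, the factor $e^2$ coming from the shift of the saddle induced by $e^{\pm4\i\sqrt z}$. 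Your route (Rice integral with kernel $\varphi(z)=\Gamma(2z-1)/(z\Gamma(z)^3)$, residues, then a saddle-point estimate of the leftover contour integral) is the template the paper uses for Theorems~\ref{t:Romik} and~\ref{t:Buf}, and your bookkeeping is correct: the pole at $z=\tfrac12$ gives $\tfrac2\pi n^{1/2}+\tfrac1{4\pi}n^{-1/2}$, the pole at $z=-\tfrac12$ gives $-\tfrac1{16\pi}n^{-1/2}$ (together $\tfrac3{16\pi}n^{-1/2}$), the integrand is indeed regular at $z=0$ and at the negative integers, and the saddle of the remaining integral sits near $2+2\i\sqrt n$, matching frequency $4\sqrt n$ and amplitude $\propto e^2n^{-1/2}$. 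What the paper's route buys is that the delicate oscillatory constants are read off from standard Bessel asymptotics rather than from a bespoke saddle-point computation; your oscillatory term is left as a plan, though comparable in explicitness to the paper's own one-line depoissonization remark.

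The one genuine gap is in your self-contained proof of the content average $\frac1{n!}\sum_{\lambda\vdash n}f_\lambda^2\sum_{u\in\lambda}q(c_u,r)=\frac{(2r)!}{(r+1)!}\binom{n}{r+1}$. Your interpolation argument needs the a priori degree bound $\le r+1$, which you justify as ``the degree of the content power-sum average $\EE\sum_u c_u^{2r}$'' --- but that degree statement is precisely what is at stake, and it does not follow from Stanley's polynomiality theorem together with the degree remark quoted in Section~1. That remark only says that each nonvanishing $N_\mu$ with $|\mu|=2s$ has degree exactly $2s$; expanding $\sum_u q(c_u,r)$ in the $e_\mu$ basis therefore yields the a priori bound $2r$, and the drop to $r+1$ is a cancellation phenomenon your argument cannot see (a polynomial of degree $\le 2r$ vanishing at $0,1,\ldots,r$ is not determined by its value at $n=r+1$). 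Your vanishing argument and the evaluation at $n=r+1$ (only the row $(r+1)$ and the column $(1^{r+1})$ contribute, each $r(2r-1)!$) are fine, but they do not close this hole. Your fallback citation also points to the wrong source: \cite{Pan12} is the hook-length identity \eqref{e:pan}, not the content identity. The correct references are \cite[eq.\,(7)]{Stan10} or \cite[Thm.\,A.1]{FKM08}, which is exactly what the paper invokes for \eqref{e:fuj}; with that substitution the rest of your argument goes through.
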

\begin{proof}
In terms of contents $c_u$ of a Young diagram $\lambda$, we have $$D(\lambda)=\sum_{u\in\lambda}\delta_{0,c_u}.$$
Define polynomials in terms of the polynomials $p(x,r)$ given in \eqref{e:p} via
$$q(x,r):=\prod_{i=0}^{r-1}(x^2-i^2)=\begin{cases}x^2p(x,r-1),&r\ge1,\\ 1,&r=0.\end{cases}$$
These also allow for a representation of the Kronecker delta, similar to \eqref{e:kron},
\begin{equation}\delta_{\ell,n}=\sum_{r=\ell}^\infty (-1)^{\ell+r}\frac{2-\delta_{0,n}}{(r+\ell)!(r-\ell)!}q(n,r),\label{e:kron2}\end{equation}
now valid for non-negative integers $\ell,n$.
By \cite[eq.\,(7)]{Stan10}, see also \cite[Thm.\,A.1]{FKM08}, we have
\begin{equation}\frac1{n!}\sum_{\lambda\vdash n}f_\lambda^2\sum_{u\in\lambda}q(c_u,r)=\frac{(2r)!}{(r+1)!}\binom{n}{r+1}.
\label{e:fuj}\end{equation}
This leads to the representation
\begin{equation}d_n=\frac1{n!}\sum_{\lambda\vdash n}f_\lambda^2\sum_{u\in\lambda}\delta_{0,c_u}=
\sum_{r\ge1}\frac{(-1)^{r+1}(2r-2)!}{(r-1)!^2r!}\binom{n}{r}.\label{e:durfsum}\end{equation}
For the Poisson generating function
$D(z):=e^{-z}\sum_{n\ge0}\frac{d_n}{n!}z^n$ we obtain
$$D(z)=z\sum_{k\ge0}\frac{(-z)^k(2k)!}{k!^2(k+1)!^2}
=z\left(2J_0^2(2\sqrt{z})-\frac{J_0(2\sqrt{z})J_1(2\sqrt{z})}{\sqrt{z}}+2J_1^2(2\sqrt{z})\right),$$
where $J_0$ and $J_1$ are Bessel functions of the first kind. We may use
$D(z)$ to recover
asymptotics of $d_n$, see
\cite[Sec.\,5.11.4]{Luk69} for asymptotic expansions of Bessel functions.
We find
$$D(z)=\frac2\pi\sqrt{z}-\frac1{16\pi\sqrt{z}}-\frac{\sin(4\sqrt{z})}{8\pi\sqrt{z}}+\frac{3\cos(4\sqrt{z})}{64\pi z}+\Oh\big(z^{-\frac32}\big),$$
for $|z|\to\infty$, $|\arg z|\le\pi-\delta$ with $\delta>0$. A uniform bound is furnished by $|D(z)|\le\cosh(4\sqrt{|z|})$.
Evaluating now $d_n=\frac{n!}{2\pi\i}\oint_C z^{-n-1}e^zD(z)dz$, with contour $C:=\{z\in\mathbb{C}:|z|=n\}$,
observing that there is an approximate saddle point at $z=n$, finishes the proof.
\end{proof}
\begin{rem}
The sequence starts $\big(d_n\big)_{n=1}^{10}\!=\!(1, 1, 1, \frac76, \frac{17}{12}, \frac{33}{20}, \frac{109}{60}, \frac{3217}{1680}, \frac{39703}{20160}, \frac{364859}{181440})$,
and it again satisfies a linear recurrence relation,
\begin{equation}\label{e:holdurf}d_{n+3}
=\frac{3n^2+9n+8}{(n+2)(n+3)}d_{n+2}
-\frac{3n+1}{n+3}d_{n+1}
+\frac{n+1}{n+3}d_{n},\end{equation}
with initial conditions
$$d_0=0,\quad d_1=1,\quad d_2=1,$$
readily obtained from \eqref{e:durfsum} using \emph{gfun}.
\end{rem}

\begin{figure}
\centering
\includegraphics[scale=1.0]{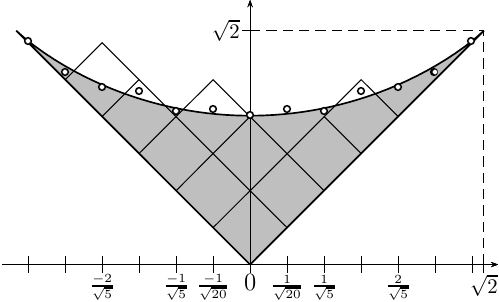}
\caption{The Logan--Shepp--Vershik--Kerov limit shape curve $\Omega$ (upper boundary of the grey region). Superimposed is the partition $(5,3,1,1)\vdash10$, properly scaled and rotated by $135^{\circ}$.
The dots have coordinates $\Big(\frac a{\sqrt{20}},{\frac1{\sqrt 5}}\big(\omega_{a,10}+\frac {|a|}2\big)\Big)$, for $a\in\{-6,\ldots,6\}$.
}
\label{fig:limshapcrop}
\end{figure}

\section{Expected fluctuations around the limit shape curve}

Let us introduce the \emph{limit shape curve}
\begin{equation}
\Omega(u)=\begin{cases}\frac2\pi\left(u\arcsin\frac u{\sqrt{2}}+\sqrt{2-u^2}\right),&\textup{ if }|u|\le\sqrt{2},\\ |u|, &\textup{ if }|u|>\sqrt{2}.
\end{cases}\label{e:lish}\end{equation}

The lower right boundary of the Young diagram of a partition $\lambda\vdash n$, scaled to have unit area, rotated together with parts of positive $x$-axis and negative $y$-axis by $135^\circ$, gives rise to a piecewise linear function $\psi_{\lambda}$, also defined on $\mathbb{R}$.
When $\lambda$ is distributed according to Plancherel measure, the random
functions $\psi_{\lambda}$ approach the limit shape curve, as $n\to\infty$,
in a sense that is made precise in the following result by Vershik and Kerov~\cite{VeKe77} and Logan and Shepp~\cite{LoS77}, which we present following closely~\cite[Thm.\,1.22]{Rom15}.
\begin{thm} \label{VKLS}{\textup{(Limit shape theorem for Plancherel-random partitions)}}
For all $\eps>0$, we have $\mathbb{P}(\|\psi_\lambda-\Omega\|_\infty>\eps)\to0$ as $n\to\infty$, i.e.,\,the random functions $\psi_\lambda$ converge to $\Omega$ in probability in the norm $\|\cdot\|_\infty$.
\end{thm}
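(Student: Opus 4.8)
The plan is to recast the statement as an exponential-scale concentration estimate governed by a variational principle, which is the classical route of Vershik--Kerov and Logan--Shepp. The starting point is the hook length formula: since $\PP\! l^{(n)}(\lambda)=f_\lambda^2/n!=n!/p_\lambda^2$ with $p_\lambda=\prod_{u\in\lambda}h_u$, one has
$$-\tfrac1n\log \PP\! l^{(n)}(\lambda)=\tfrac{2}{n}\sum_{u\in\lambda}\log h_u-\tfrac1n\log n!.$$
First I would rescale the diagram by $1/\sqrt n$ along both axes, so that $\psi_\lambda$ lives in the fixed space $\mathcal{A}$ of $1$-Lipschitz profiles $f\ge|\cdot|$ enclosing a fixed normalized area, and then approximate the discrete hook sum by its continuous analogue. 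Using Stirling's formula together with a Riemann-sum comparison, the aim is to prove that, \emph{uniformly} over $\lambda\vdash n$,
$$-\tfrac1n\log \PP\! l^{(n)}(\lambda)=\mathcal{I}(\psi_\lambda)+o(1),$$
where $\mathcal{I}$ is the (Logan--Shepp--Vershik--Kerov) hook functional on $\mathcal{A}$, a nonnegative functional whose leading term is the logarithmic energy $-\tfrac12\iint_{\mathbb{R}^2}\log|u-v|\,df'(u)\,df'(v)$ of the signed measure $df'$ (plus lower-order linear terms), normalized so that $\mathcal{I}\ge0$ with $\mathcal{I}(\Omega)=0$.

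Next I would carry out the variational analysis of $\mathcal{I}$. The space $\mathcal{A}$ is compact in $\|\cdot\|_\infty$ by the Arzel\`a--Ascoli theorem, its elements being uniformly Lipschitz and uniformly compactly supported. On the affine constraint set the leading double integral is, up to sign, a logarithmic energy, which is positive definite in the fluctuation; hence $\mathcal{I}$ is strictly convex and lower semicontinuous and therefore has a unique minimizer. Writing down the associated Euler--Lagrange equation --- a singular integral equation whose solution is governed by an arcsine/semicircle density --- identifies this minimizer as the curve $\Omega$ of \eqref{e:lish}. Compactness of $\mathcal{A}$ together with uniqueness of the minimizer then upgrades pointwise strict positivity into a uniform gap: for every $\eps>0$,
$$\eta(\eps):=\inf\{\mathcal{I}(f):f\in\mathcal{A},\ \|f-\Omega\|_\infty\ge\eps\}>0.$$

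The concentration step is then short. Combining the two displays above, on the event $\{\|\psi_\lambda-\Omega\|_\infty>\eps\}$ one has $\PP\! l^{(n)}(\lambda)\le \exp(-n(\eta(\eps)+o(1)))$ uniformly, while the number of partitions of $n$ is subexponential, $\log p(n)=O(\sqrt n)=o(n)$ by the Hardy--Ramanujan estimate. Hence
$$\mathbb{P}\big(\|\psi_\lambda-\Omega\|_\infty>\eps\big)=\!\!\sum_{\lambda:\,\|\psi_\lambda-\Omega\|_\infty>\eps}\!\!\PP\! l^{(n)}(\lambda)\le p(n)\,e^{-n(\eta(\eps)+o(1))}=e^{-n(\eta(\eps)+o(1))}\longrightarrow0,$$
which is precisely the assertion of the theorem.

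The hard part will be the first step: establishing the energy approximation uniformly over \emph{all} $\lambda\vdash n$, not merely for shapes already close to $\Omega$. The difficulty is twofold. Replacing $\sum_{u}\log h_u$ by the continuous double integral forces control of the logarithmic singularity along the diagonal $u=v$, i.e.\ of boxes of small hook length, where a naive Riemann-sum bound diverges; a careful regularization near the diagonal is needed and must be valid for every (possibly degenerate) shape. Separately, the Euler--Lagrange step requires actually solving the singular integral equation and verifying that $\Omega$ as defined in \eqref{e:lish} is its solution, which is where the arcsine law and the explicit form of $\Omega$ enter. By contrast, once the uniform energy approximation and the uniqueness of the minimizer are in hand, the compactness and concentration arguments are soft.
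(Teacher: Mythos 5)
The paper itself does not prove this theorem: it is quoted as the classical Logan--Shepp--Vershik--Kerov result, stated (as the surrounding text indicates) following \cite[Thm.\,1.22]{Rom15}, with the original proofs in \cite{VeKe77} and \cite{LoS77}. So your proposal can only be measured against that cited literature, and what you describe is indeed the standard variational proof found there: hook-length formula plus Stirling, an exponential-scale approximation of $\PP\! l^{(n)}(\lambda)$ by a hook functional of the rescaled profile, identification of $\Omega$ as the unique minimizer via the positive-definite logarithmic energy and its Euler--Lagrange singular integral equation, and concentration from the Hardy--Ramanujan bound $\log p(n)=\Oh(\sqrt n)$. One refinement: the two-sided uniform statement $-\frac1n\log\PP\! l^{(n)}(\lambda)=\mathcal{I}(\psi_\lambda)+o(1)$ should be replaced by the one-sided bound $\PP\! l^{(n)}(\lambda)\le\exp\big(-n(\mathcal{I}(\psi_\lambda)+o(1))\big)$; that is all your concentration step uses (since probabilities sum to one, no lower bound is needed), it is what the sources actually establish uniformly, and it avoids having to interpret an $o(1)$ error for degenerate shapes such as $\lambda=(n)$, where $-\frac1n\log\PP\! l^{(n)}(\lambda)=\log n-1+o(1)$ diverges and both sides of your identity must be allowed to blow up together.

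The genuine gap is the compactness claim. The space $\mathcal{A}$ is \emph{not} compact in $\|\cdot\|_\infty$, so Arzel\`a--Ascoli does not deliver the uniform gap $\eta(\eps)>0$: elements of $\mathcal{A}$ are uniformly Lipschitz but not uniformly compactly supported. Concretely, a unit-area tent of fixed height glued onto $|u|$ at distance $k$ from the origin is $1$-Lipschitz, lies in $\mathcal{A}$ for every $k$, and as $k\to\infty$ admits no subsequence converging uniformly to an element of $\mathcal{A}$, because the uniform limit $|u|$ loses the prescribed area; Young diagrams realize exactly this degeneration, e.g.\ $\lambda=(n)$ has rescaled profile spread over an interval of length of order $\sqrt{n}$. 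To close the argument you must either first truncate --- show that profiles protruding from a fixed window $[-C,C]$ carry negligible probability or have functional value bounded below by a large constant, and then apply compactness on the truncated class --- or prove the gap directly from the quadratic form, using that the $\dot H^{1/2}$-type logarithmic energy dominates a fixed $L^\infty$ deviation for Lipschitz perturbations vanishing at infinity; either way this is a real estimate, not a consequence of compactness, and it is precisely where the cited proofs do nontrivial work. With that repair, and with the one-sided bound above, your outline matches the known proof.
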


A discretized version of $\psi_{\lambda}$, defined on the set $\big\{\frac a{\sqrt{2n}}:a\in\mathbb{Z}\big\}$, can be expressed in terms of contents $c_u$ of $\lambda$ via $\Psi_\lambda(a):=\sum_{u\in\lambda}
\delta_{{\color{black}-}a,c_u}$.

Indeed, the set $\Big\{\Big(\frac a{\sqrt{2n}},\frac 2{\sqrt{2n}}\big(\Psi_\lambda(a)+\frac {|a|}2\big)\Big):a\in\mathbb{Z}\Big\}$
is a subset of the graph of $\psi_\lambda$ containing, among others, all the points where the slope of $\psi_\lambda$ changes from $1$ to $-1$ or back.
For example, if $\lambda=(5,3,1,1)$,
then $(\Psi_\lambda(a))_{a=-4}^4=(1,1,1,2,2,1,1,1,0)$. Define now a related function, $\Phi_\lambda(a):=\frac12\big(\Psi_\lambda(a)+\Psi_\lambda(-a)\big)$, i.e.,
$$\Phi_\lambda(a)=\begin{cases}D(\lambda),&a=0,\\
\frac12\sum_{u\in\lambda}
\delta_{|a|,|c_u|},&\textup{else.}\end{cases}$$
This symmetrised function is used because it can be expressed in terms of Kronecker deltas restricted to pairs of nonnegative integers, thus allowing to use the representation \eqref{e:kron2}.
Next, let
 $\omega_{a,n}:=\EE\Phi_\lambda(a)$, with $\lambda\vdash n$ distributed according to the Plancherel measure, and define a sequence of functions
$$\tilde\Omega_n(u):=\sqrt{\frac2n}\left(\omega_{\lfloor\sqrt{2n}u\rfloor,n}
+\frac12|\lfloor\sqrt{2n}u\rfloor|\right),$$
that one would expect to converge to $\Omega(u)$, although such convergence is not implied by Theorem~\ref{VKLS}.
By \cite[Thm.\,3.6]{Bog07} we have convergence in distribution
of $\frac{\pi\sqrt{n}}{\sqrt{2\log n}}\left[\sqrt{\frac2n}\Big(\Phi_\lambda(\lfloor\sqrt{2n}u\rfloor)
+\frac12|\lfloor\sqrt{2n}u\rfloor|\Big)-\Omega(u)\right]$
to a standard normal random variable in case that $|u|<\sqrt{2}$.
Should there also be convergence of second moments,
$\tilde\Omega_n(u)=\Omega(u)+\Oh\left(\sqrt{\frac{\log n}n}\right)$ would follow for $|u|<\sqrt{2}$.
See Figure~\ref{fig:limshapcrop} for the limit shape curve, and, scaled to unit area, a superimposed partition of 10, and
values $\tilde\Omega_{10}(u)$ for $u\in\{\frac a{\sqrt{20}}:-6\le a\le6\}$.
There is a seeming coincidence on the $y$-axes, yet $\Omega(0)=\frac{2\sqrt{2}}{\pi}\approx.900316$,
$\frac{\omega_{0,10}}{\sqrt{5}}=\frac{364859}{181440\sqrt{5}}\approx.899305$, and the ordinate of the upper corner of the rotated Young diagram, $\frac{2}{\sqrt{5}}\approx.894427$, are all different.
\begin{figure}
\centering
\begin{tabular}{cc}
\!\!\!\!\!\!\includegraphics[scale=0.32]{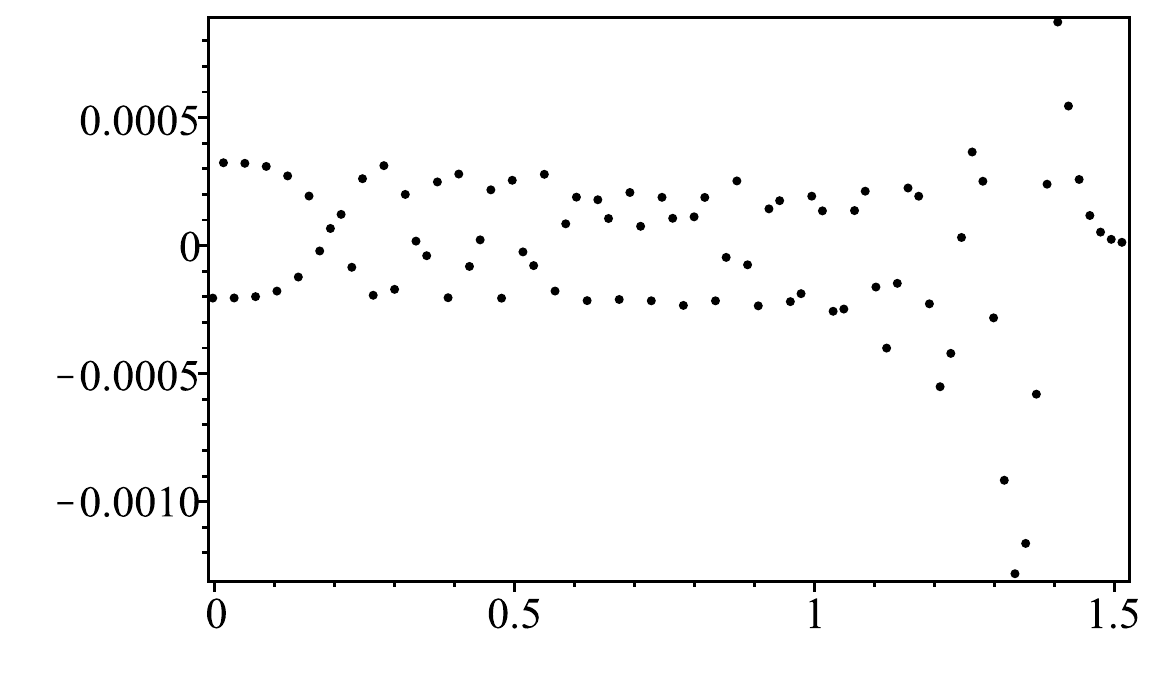}&
\includegraphics[scale=0.32]{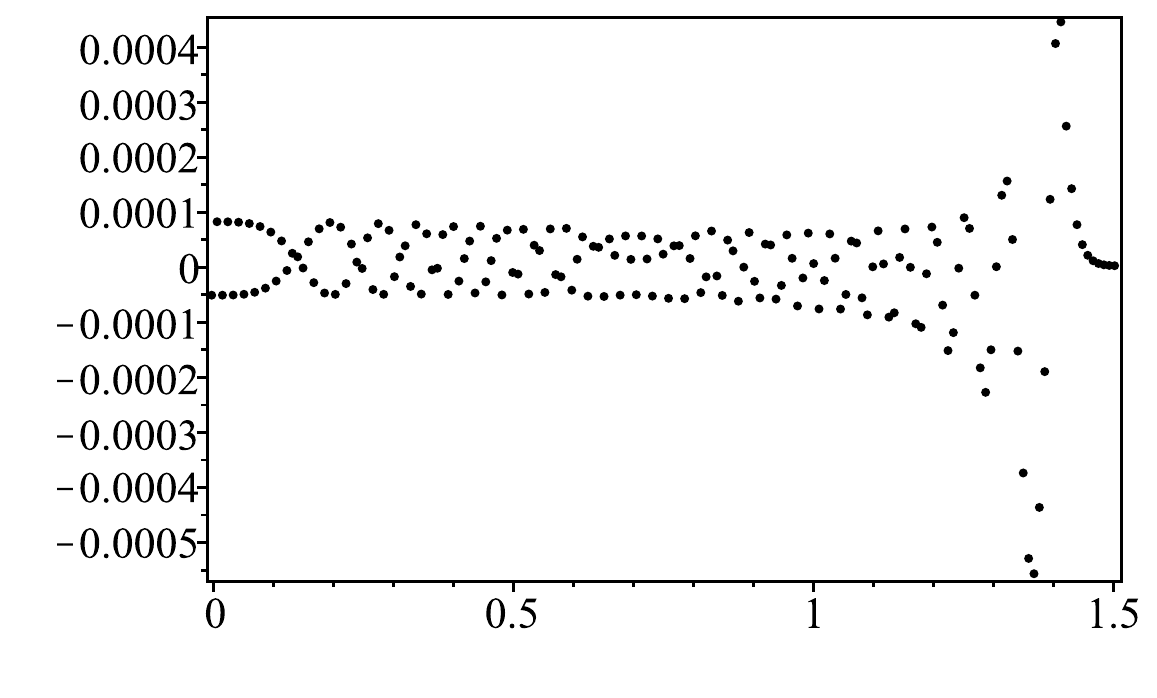}\\
\!\!\!\!\!\!\includegraphics[scale=0.32]{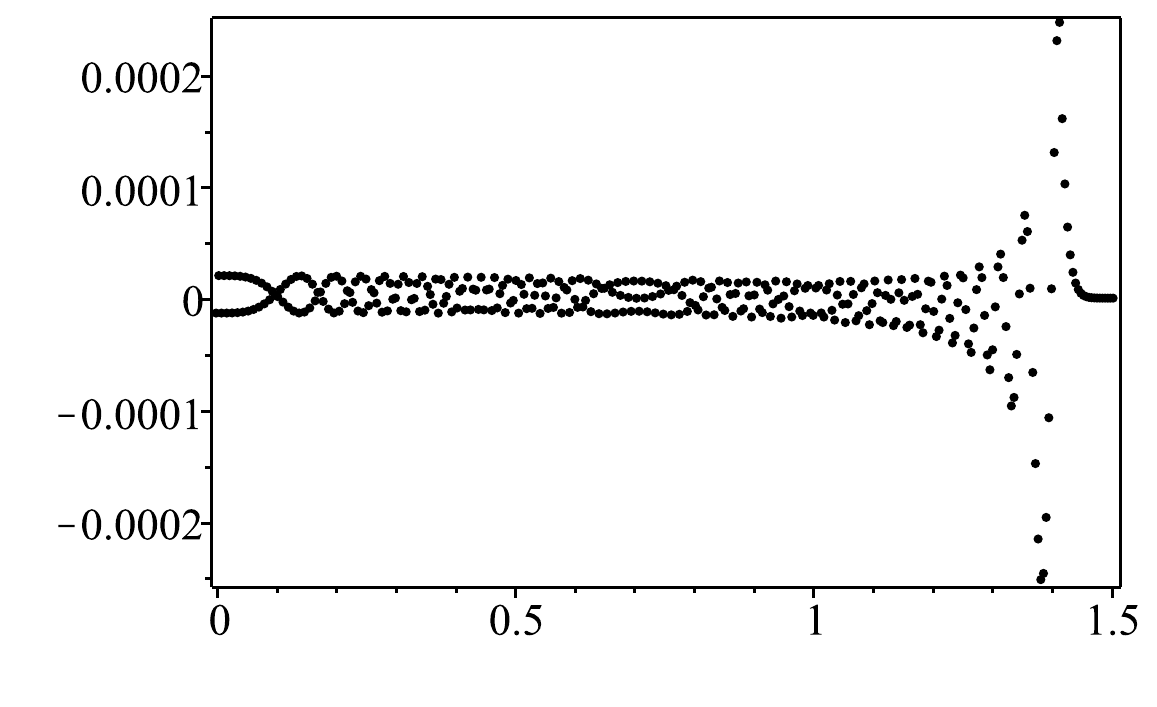}&\!\!\!\!
\includegraphics[scale=0.33]{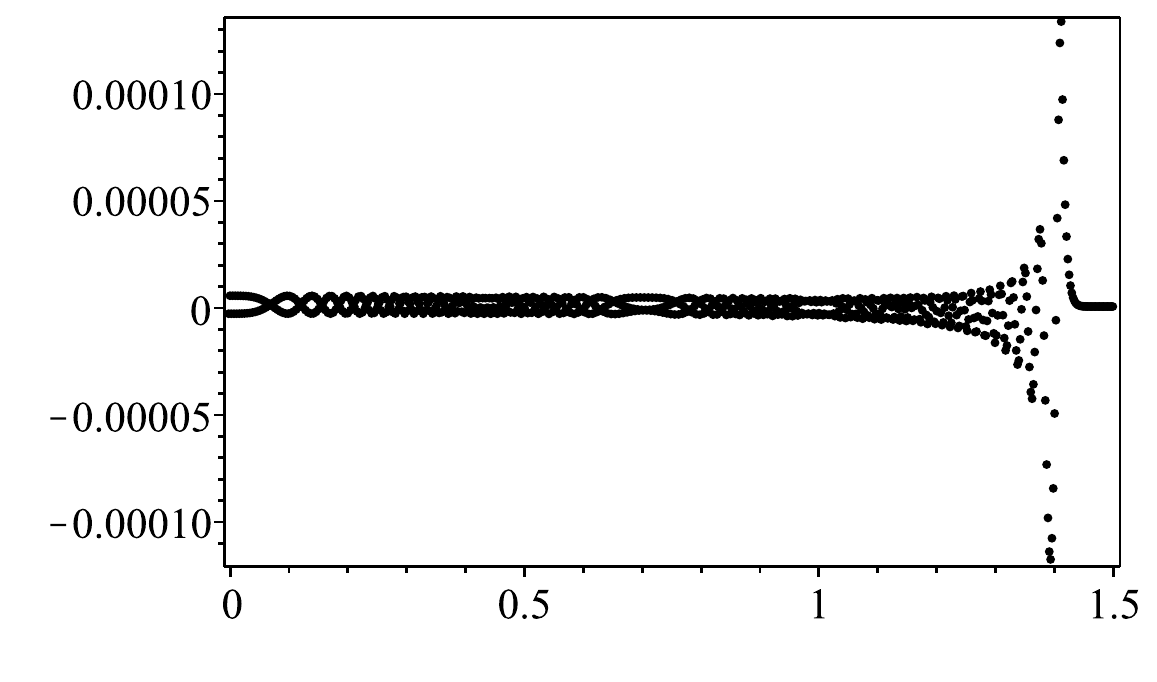}
\end{tabular}
\caption{Plots of $\tilde\Omega_n(u)-\Omega(u)$ for roughly quadrupling values of $n\in\{1573,6230,24798,98943\}$ from top left to bottom right.
}
\label{fig:OmConv}
\end{figure}

An alternating sum representation of $\omega_{a,n}$, building upon \eqref{e:kron2}, is the following
\begin{equation}\label{oman}\omega_{a,n}=\sum_{r>a}\frac{(-1)^{r+a+1}(2r-2)!}{(r-1+a)!(r-1-a)!r!}\binom{n}{r},
\end{equation}
which again gives rise to a linear recurrence relation (obtained using \emph{gfun})
\begin{align*}(n+4)(n+a+3)(n-a+3)\omega_{a,n+4}&=[4n^3+32n^2+(86-2a^2)n+78-7a^2]\omega_{a,n+3}\\
&\hphantom{==}-(n+3)(6n^2+22n+20-a^2)\omega_{a,n+2}\\
&\hphantom{==}+(n+1)(n+2)(n+3)(4\omega_{a,n+1}-\omega_{a,n}),\end{align*}
holding for $n\ge a-2$, with initial conditions
$$\omega_{a,n}=0,\textup{ for }\min(0,a-2)\le n\le a,\quad \omega_{a,a+1}=\frac1{(a+1)!}.$$
Note that there is a common factor $(n+1)$ in the recurrence relation, when $a=2$. Note also that setting $a=0$ yields a recurrence relation with both order and degree one larger than the one given in \eqref{e:holdurf}.

As was done for $d_n$, asymptotics via Poisson generating functions (which can again be expressed in terms of Bessel functions) can be obtained also for $\omega_{a,n}$ for fixed integer $a>0$:
\begin{equation}\label{omegarec}\omega_{a,n}+\frac a2=\frac2\pi\sqrt{n}+\left(\frac{4a^2+3}{16\pi}-(-1)^a\frac{e^2}{8\pi}\sin\left(4\sqrt{n}\right)\right)\frac1{\sqrt{n}}+\Oh\big(n^{-1}\big).\end{equation}
In order to obtain asymptotics of $\omega_{a,n}$ for $n$ and $a$ simultaneously approaching $\infty$, which would be needed for asymptotics
of $\tilde \Omega_n(u)$, one could use the parametrization $n=2\kappa^2\in\mathbb{N}, a=\lfloor2\kappa u\rfloor$, and consider
$$\frac{\omega_{\lfloor2\kappa u\rfloor,2\kappa^2}}\kappa=\frac{1}{2\pi \i}\oint_{C''}\frac1{\kappa}\frac{(-1)^{\lfloor2\kappa u\rfloor+1}\Gamma(-z)\Gamma(2z-1)}{\Gamma(z+\lfloor2\kappa u\rfloor)\Gamma(z-\lfloor2\kappa u\rfloor)\Gamma(z+1)}
\frac{(2\kappa^2)!\Gamma(-z)}{\Gamma(2\kappa^2+1-z)}dz,
$$
implied by \eqref{oman}, where $C''$ is a contour that encircles integers $1,\ldots,2\kappa^2$, but neither
any other integers, nor poles of the integrand. Outside $C''$, the integrand has poles at $\frac12$, at $0$, and at all negative
half-integers. For fixed $u$ it turns out that each residue contributes to the leading (constant) term of the asymptotics in the limit $\kappa\to\infty$, with the sum of those contributions converging, but for fixed $\kappa$ the sum of residues does not converge. Balancing those two limiting processes (taking more and more residues into account, letting
$\kappa\to\infty$) and at the same time bounding the integral over a sequence of correspondingly deformed contours appears to be intricate, so unfortunately we have not been able to prove $\tilde\Omega_n(u)\to\Omega(u)$ for $u\ne0$.

Using holonomicity of
$(\omega_{a,n})_{n\ge0}$ to generate many terms of that sequence for many values of $a$, we obtain the plots in Figures~\ref{fig:OmConv} and \ref{fig:logshefluc}.
The values for $n$ in Figure~\ref{fig:OmConv} and in the second plot in Figure~\ref{fig:logshefluc} have been chosen to satisfy $\sin(4\sqrt{n})\approx1$ in order to give maximal weight to the term $(-1)^a$ present in \eqref{omegarec} and thus ensure better comparability of the plots in the vicinity of $0$.
The value of $n$ in the first plot of Figure~\ref{fig:logshefluc} satisfies $\sin(4\sqrt{n})\approx0$.
We conclude this section with some (non-rigorous) observations based on these plots.

\begin{figure}
\centering
\includegraphics[scale=.6]{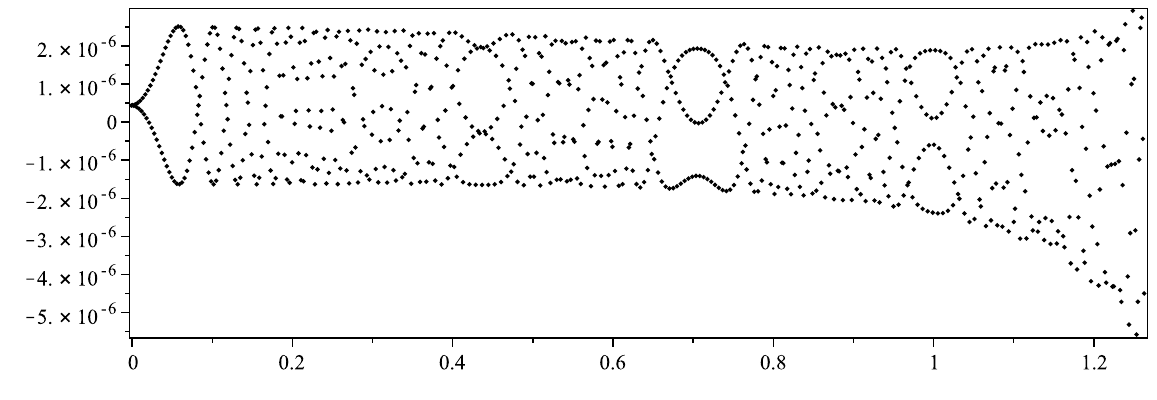}
\includegraphics[scale=.6]{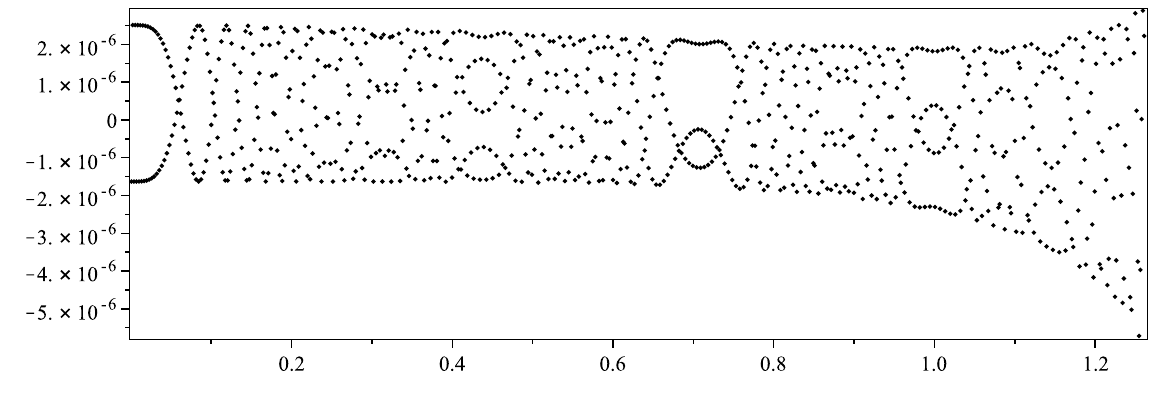}
\caption{Plots of $\tilde\Omega_n(u)-\Omega(u)$ for $n=200415$ (upper plot) and for $n=200767$ (lower plot), with $u$ varying in the discrete set $\{\frac{a}{\sqrt{2n}}:0\le a\le 800\}$.
}
\label{fig:logshefluc}
\end{figure}

a) Regarding convergence of the sequence $\big(\tilde\Omega_n(\cdot)\big)_{n\ge1}$ to $\Omega(\cdot)$, the plots in Figure~\ref{fig:OmConv} suggest that for any $\epsilon>0$ we have $\max\limits_{0\le u\le\sqrt{2}-\epsilon}|\tilde\Omega_n(u)-\Omega(u)|=\Oh\big(\frac1n\big)$.
Near $\sqrt{2}$ we only seem to have
$\max\limits_{|u-\sqrt{2}|\le\epsilon}|\tilde\Omega_n(u)-\Omega(u)|
=\Oh\big(\frac1{\sqrt{n}}\big)$, with this slower convergence in agreement with
the convergence rate stated in \cite[eq.\,(3.4)]{Bog07} for convergence of
$\tilde\Omega_n(\sqrt{2})\to\Omega(\sqrt{2})$ in probability, building upon results
from \cite{BDF99,BOO00,Joh01,Oko00} addressing the \emph{problem of the longest increasing subsequence}, see also \cite{Rom15}.

b) A common feature of all plots in Figures~\ref{fig:OmConv} and ~\ref{fig:logshefluc} is the presence of subintervals of ``smooth'' behavior surrounded by regions of more ``irregular'' behavior.
In order to enforce ``smooth'' dependence of $\omega_{a,n}$ on $a$, one would, in the light of \eqref{omegarec}, restrict to odd (or to even) $a$. However, this would only work for $0\le a\le \alpha_n$ with $\alpha_n=o(\sqrt{n})$. The location of the first ``peak'' to the right of $0$ seems to suggest, that $\alpha_n=\Theta\big( n^{\frac14}\big)$ may hold. For larger $a$ it is no longer useful to distinguish between even and odd, instead one should consider $\omega_{a,n}$ evaluated at $a$ belonging to other arithmetic progressions: Near $\frac a{\sqrt{2n}}=\sqrt{2}\cos\frac\pi3\approx0.707$ the way to go would be to consider $\big(\omega_{a+3k,n}\big)_{k}$, whereas near
$\frac a{\sqrt{2n}}=\sqrt{2}\cos\frac\pi4=1$ it would be $\big(\omega_{a+4k,n}\big)_{k}$. Every fifth term should be taken near
$\sqrt{2}\cos\frac\pi5\approx1.144$ and $\sqrt{2}\cos\frac{2\pi}5\approx0.437$.
We expect this pattern to continue, with regions of smoothness near $\sqrt{2}\cos\frac{\ell\pi}m$ for $1\le\ell<\frac m2$, and $\ell,m$ coprime. For larger $m$ these regions will become noticeable only if $n$ gets large enough, and those regions will shrink with $n$ further increasing, making room for yet other regions to pop up.

\section{A heuristic upper bound for the variance of the number of bumping steps in the RSK algorithm}
Let $L_n:=X_n+Y_n$, $\ell_n:=\EE (X_n+Y_n)$, and $v_n:=\Var (X_n+Y_n)$.
We now give a heuristic derivation of $\ell_n$, and an upper bound for $v_n$ based on \cite{Ker99}. Let
$\Omega(x)$ be the function defined in \eqref{e:lish}, describing
the limit shape of normalized Young diagrams with respect to Plancherel measure. Denote $s(x):=\frac1{\pi}\sqrt{2-x^2}$, the density
of the semicircle distribution with support $[-\sqrt{2},\sqrt{2}]$.
As shown in \cite{Ker99}, this is also the limiting density of the random abscissa of a newly inserted box into a scaled and rotated Young diagram that closely resembles the limit shape curve, when new insertions are made according to the Plancherel growth process, that ensures that at each stage of the process the Young diagram is distributed according to Plancherel measure, see also~\cite[sec.\,1.19]{Rom15}.
This leads to
$$\ell_n-\ell_{n-1}\sim\sqrt{2n}\int_{-\sqrt{2}}^{\sqrt{2}}\Omega(x)s(x)dx=\frac{128}{9\pi^2}\sqrt{n},$$
and thus $\ell_n\sim\frac{256}{27\pi^2}n^{\frac32}$. Moreover,
assuming independence of $L_{n-1}$ and
$L_n-L_{n-1}$,
$$v_n-v_{n-1}\sim 2n\int_{-\sqrt{2}}^{\sqrt{2}}\left(\Omega(x)-\frac{64\sqrt{2}}{9\pi^2}\right)^2s(x)dx=\frac{54\pi^4+2835\pi^2-32768}{162\pi^4}n,$$
and thus
$$v_n\sim\frac{54\pi^4+2835\pi^2-32768}{324\pi^4}n^2\approx0.01496867061\,n^2.$$
Numerically we have e.g. $\frac{v_{50}}{50^2}\approx0.01216526413$.
\begin{figure}
\centering
\includegraphics[scale=.55]{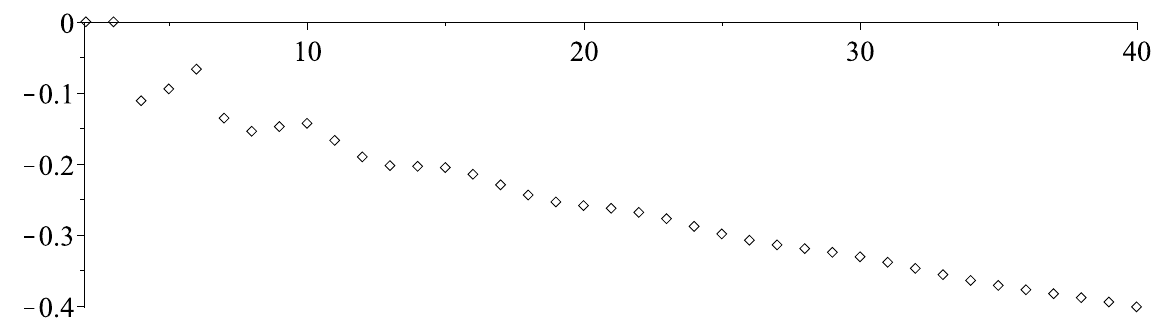}
\caption{A plot of $\Cov(L_n-L_{n-1},L_{n-1})$ for $n\in\{2,\ldots,40\}$.
}
\label{fig:cov40}
\end{figure}

Indeed, $L_{n-1}$ and $L_n-L_{n-1}$ seem to be negatively correlated.
The sequence of covariances $\big(\Cov(L_n-L_{n-1},L_{n-1})\big)_{n\ge2}$ starts
$(0,0,-\frac19,-\frac{17}{180},-\frac{1}{15},-\frac{61}{450},-\frac{863}{5600},\ldots)$, staying negative up to $n=40$ with roughly linear growth, see Figure~\ref{fig:cov40}.

So it seems that in the light of
Lemma~\ref{lem:bigOm} one can safely guess
that for the number $Y_n$ of bumping steps $\Var Y_n=\Theta(n^2)$ holds. It would be desirable to have a proof for that, and also know at least the leading asymptotic term of $\Var Y_n$.

\section{Conclusion}
In this paper we have obtained asymptotics of expectations of certain statistics of Plancherel distributed Young diagrams. That these statistics could be expressed in terms
of hook lengths and contents of the boxes of such diagram was essential, as it allowed us to invoke polynomiality results for Plancherel averages, leading to representations of expectations as binomial convolutions,
that make for easier asymptotic treatment. We hope that this approach will help to analyse further statistics of Plancherel distributed Young diagrams.
Now polynomiality results have also been found for measures different from Plancherel (such as the Jack deformation of Plancherel measure, see~\cite{Ols10}), or for subclasses of Plancherel distributed Young diagrams, such as strict partitions (see~\cite{Mat17,HaX19,MaS20}). In case that appropriate substitutes for \eqref{e:pan} or \eqref{e:fuj} are at hand, it is reasonable to believe that certain statistics
in these settings could also be analysed along the lines of this paper.

\section{Appendix}
\subsection{Proof of equation \eqref{e:id}:} We use $p(n,r)=\frac{(2r+1)!}{n}\binom{n+r}{2r+1}$ and rewrite \eqref{e:id} as
$$n^2=\sum_{r\ge0}\frac{(-1)^r}{1-2r}\binom{2r}{r}\binom{n+r}{2r+1}=:S_n$$
Denoting by $\Delta$ the forward difference operator, we will show that $\Delta^3S$ is the zero sequence, which together with $S_1=1,\Delta S_1=3,\Delta^2 S_1=2$ yields $S_n=n^2$ for $n\in\mathbb{N}$. Now
\begin{align*}\Delta^3S_n&=\sum_{r\ge1}\frac{(-1)^r}{1-2r}\binom{2r}{r}\binom{n+r}{2r-2}=\sum_{r\ge1}(-1)^{r+1}\frac{2}{r}\binom{2r-2}{r-1}\binom{n+r}{2r-2}\\
&=2\sum_{r\ge1}\frac{(-1)^{r+1}}{n+2}\binom{n+r}{r-1}\binom{n+2}{r}=\frac{2}{n+2}\sum_{r\ge0}(-1)^{r}\binom{n+r+1}{n+1}\binom{n+2}{r+1}=0,
\end{align*}
where the last equality follows from \cite[eq\,(5.24)]{GKP94}.\qed

\subsection{Proof of equation \eqref{e:kron}:} The equation is easily checked for $\ell> n$, since then also $r\ge n$ and thus $p(n,r)=0$. For $1\le\ell\le n$ we have
\begin{align*}\sum_{r=\ell-1}^{n-1}&\frac{2\ell^2(-1)^{r+\ell+1}}
{(r+\ell+1)!(r-\ell+1)!}\frac{(2r+1)!}{n}\binom{n+r}{2r+1}\\
&=\frac{2\ell^2(-1)^{\ell+1}}
{n(n+\ell)}\sum_{r=\ell-1}^{n-1}(-1)^r\binom{n+r}{n+\ell-1}\binom{n+\ell}{r+\ell{\color{black}+}1}\\
&=\frac{2\ell^2(-1)^{\ell+1}}{n(n+\ell)}(-1)^{n+1}\delta_{\ell,n}=\delta_{\ell,n},
\end{align*}
treating the case $\ell=n$ directly, and using \cite[eq\,(5.24)]{GKP94} again for $n>\ell$.\qed

\subsection{Proof of equation \eqref{e:kron2}:}
We use $q(n,r)=\frac{n(2r)!}{n+r}\binom{n+r}{2r}$ for $n>0$, and $q(0,0)=1$. The equation is easily checked for $n=0$, and for $\ell> n$, since then also $r> n$ and thus $q(n,r)=0$.
For $n>0,\,0\le\ell\le n$ we have
\begin{align*}\sum_{r=\ell}^{n}&\frac{2(-1)^{r+\ell}}
{(r+\ell)!(r-\ell)!}\frac{n(2r)!}{n+r}\binom{n+r}{2r}=(-1)^{\ell}\frac{2n}{n+\ell}
\sum_{r=\ell}^{n}(-1)^r\binom{n+r-1}{n+\ell-1}\binom{n+\ell}{r+\ell}\\
&=\frac{2n(-1)^{\ell}}{n+\ell}(-1)^{n}\delta_{\ell,n}=\delta_{\ell,n},
\end{align*}
treating the case $\ell=n$ directly, and using \cite[eq\,(5.24)]{GKP94} again for $n>\ell$.\qed

\subsection{Proof of equation \eqref{e:numberd}:}
Using $H_\ell-1=\sum_{k=2}^\ell\frac1k$, and interchanging summation, we obtain, using \cite[eq\,(5.16)]{GKP94} at several places,
\begin{align*}\sum_{\ell=2}^r\frac{(-1)^{\ell}\ell^2(H_\ell-1)}{(r+\ell)!(r-\ell)!}
&=\frac1{(2r)!}\sum_{k=2}^r\frac1k\sum_{\ell=k}^r(-1)^{\ell}[r^2-(r+\ell)(r-\ell)]\binom{2r}{r-\ell}\\
&=\frac1{(2r)!}\sum_{k=2}^r\frac1k\sum_{\ell=k}^r(-1)^{\ell}
\left[r^2\binom{2r}{r-\ell}-2r(2r-1)\binom{2r-2}{r-\ell-1}\right]\\
&=\frac1{(2r)!}\sum_{k=2}^r\frac1k(-1)^k
\left[r^2\binom{2r-1}{r-k}-2r(2r-1)\binom{2r-3}{r-k-1}\right]\\
&=\frac1{(2r)!}\sum_{k=2}^r\frac{(-1)^kr}{r-1}(k-1)
\binom{2r-1}{r-k}\\
&=\frac1{(2r)!}\sum_{k=2}^r\frac{(-1)^kr}{r-1}\left[(r-1)
\binom{2r-1}{r-k}-(2r-1)\binom{2r-2}{r-k-1}\right]\\
&=\frac1{(2r)!}\frac{r}{r-1}\left[(r-1)
\binom{2r-2}{r}-(2r-1)\binom{2r-3}{r}\right]\\
&=\frac1{(2r)!}\frac{r}{r-1}\frac{r}{r-2}\binom{2r-3}{r}
=\frac1{4(r-1)(2r-1)(r-1)!^2}.\qed
\end{align*}

\subsection{Saddle point evaluation of the integral $I_n:=\frac{1}{2\pi \i}\oint_{C'}f(z)\frac{n!\Gamma(-z)}{\Gamma(n+1-z)}dz$}
This integral appears in the proof of Theorem~\ref{t:Romik}, see section~2 for relevant notation.
Putting $n=m^2$, the integrand may be rewritten as
$$G(z):=\frac{\pi\Gamma(m^2+1)\Gamma^2(2z-1)}{\sin (\pi z)\Gamma(m^2+1-z)(2z-3)\Gamma^3(z+1)\Gamma^3(z)}.$$
Denoting by $\psi$ the digamma function, we have
\begin{align*}\frac{G'(z)}{G(z)}&=\psi(m^2+1-z)+4\psi(2z-1)-3\psi(z+1)-3\psi(z)
-\pi\cot\pi z-\frac2{2z-3}\\
&\sim\log\frac{(z-m^2-1)(2z-1)^4}{(z+1)^3z^3}-\frac1{z-\frac12}+\frac3{2z+2}
+\frac3{2z}-\frac1{z-\frac32},
\end{align*}
with error terms $\Oh(m^{-2})+\Oh(z^{-2})$, holding for $m\to+\infty,|z|\to\infty$, subject to $z=o(m^2)$ and $\delta<|\arg z|\le\pi-\delta$
for some $\delta>0$.

Two approximate saddle points of $G(z)$ are $\zeta:=6+4m\i$ and $\bar\zeta=6-4m\i$.
Indeed, $\frac{d}{dz}\log G(\zeta)=\frac{G'(\zeta)}{G(\zeta)}=\Oh(\frac1{m^2})$, and
$\frac{d^2}{dz^2}\log G(\zeta)=\frac\i{2m}+\Oh(\frac1{m^2})$, which suggests a
contour directed north-west in the point $\zeta$: Let $z=\zeta+e^{\i\frac{\pi}4}u$ and observe
$$G(z)=G(\zeta)\exp\left(\Oh(\tfrac{u}{m^2})+\tfrac{\i}{4m}(e^{\i\frac{\pi}4}u)^2+
\Oh(\tfrac{u^2}{m^2})\right)=G(\zeta)e^{-\frac{u^2}{4m}}
\left(1+\Oh(\tfrac{u+u^2}{m^2})\right).$$
Also note that a cumbersome evaluation results in
$$G(\zeta)=\frac{-\i e^{8\i m+8}}{2^{13}\pi m^4}\left(1+\Oh\left(\frac1m\right)\right).$$
Define the counter-clockwise oriented contour $C'$ as the polygon connecting the points
\begin{align*}z_0:=-\tfrac52+\eps,\quad& z_1:=\zeta-m-m\i,\quad z_2:=\zeta+m+m\i,\quad z_3:=n+1+m\i,\\
&z_6:=\bar\zeta-m+m\i,\quad z_5:=\bar\zeta+m-m\i,\quad z_4:=n+1-m\i,
\end{align*}
with $\eps>0$ small, and with segment $c_i$ connecting $z_i$ and $z_{i+1}$ for $0\le i\le5$, and $c_6$ connecting $z_6$ and $z_0$.
It turns out that the integrals along $c_0$ and $c_6$ are of order $\Oh(m^{-5+2\eps})$, and $c_i$, for $2\le i\le4$, make even smaller contributions.
Moreover, the combined contribution of $c_1$ and $c_5$ is $-2\sqrt{\frac{m}{\pi}}\Im\big(e^{\i\frac{\pi}4}G(\zeta)(1+\Oh(\frac1m))\big)$, which, up to error terms of order $\Oh(m^{-\frac92})$, simplifies to
$$-2\sqrt{\frac{m}{\pi}}\Im(e^{\i\frac{\pi}4}G(\zeta))
=\frac{e^8}{2^{12}\pi^\frac32m^\frac72}\Im(\i e^{\frac{\pi\i}4+8\i m})
=\frac{e^8}{2^{12}\pi^\frac32m^\frac72}\cos\left(\frac{\pi}4+8 m\right).$$

\subsection{Bounding the integral $J_n:=\frac{1}{2\pi \i}\oint_{C'}\Phi_n(z)\,dz$}
This integral appears in the proof of Theorem~\ref{t:Buf}, see section~3 for relevant notation. Let $C'$ be the boundary of the rectangle with corners $-\frac12\pm \i 4em$,
$n+\frac12\pm \i 4em$, and $m=\sqrt{n}$.

Observe $(-1)^{\ell}\ell^2\big(\log \ell-H_\ell+\gamma+\frac1{2\ell}-\frac1{12\ell^2}\big)=\Oh(\ell^{-2})$, and, abbreviating $\rho=r+\frac12$,
$$\left|\frac{\Gamma^2(r+1)}{\Gamma(r+\ell+1)\Gamma(r-\ell+1)}\right|=\left|\frac{r(r-1)\cdots(r-\ell+1)}{(r+1)\cdots(r+\ell)}\right|
=\left|\prod_{k=1}^\ell\frac{\rho-(k-\frac12)}{\rho+(k-\frac12)}\right|\le1
$$ for $\Re\rho\ge0$, i.e., for $\Re r\ge-\frac12$,
therefore $\Gamma^2(r+1)h(r)=\Oh(1)$ for $\Re r\ge-\frac12$, which leads to $\Gamma^2(z+1)g(z)=\Oh(1)$ for $\Re z\ge-\frac12$, $|z-w|\ge\frac12$ for $w\in\{0,\frac12,1\}$. Hence
$$\Phi_n(z)=\Oh\left(\frac{\Gamma(2z)\Gamma(2z-1)}{\Gamma^4(z+1)\Gamma(z)}\frac{n!\,\Gamma(-z)}{\Gamma(n+1-z)}\right)
=\Oh\left(\frac{2^{4z}}{z^4\Gamma^2(z+1)\sin\pi z}\frac{\Gamma(n+1)}{\Gamma(n+1-z)}\right),$$
by the reflection and duplication formulas.
For $z=-\frac12+\i t$, with $t=\Oh\big(\sqrt{n}\big)$, we have $|\Gamma^2(z+1)\sin\pi z|=\pi$ and $\left|\frac{\Gamma(n+1)}{\Gamma(n+1-z)}\right|=\Oh\big(n^{-\frac12}\big)$, yielding a contribution
$\Oh\big(n^{-\frac12}\big)$ from the integral over the left segment
of $C'$. The contributions from the two horizontal segments is
$\Oh\big(n^{-\frac32}\big)$, while the right segment makes an
exponentially small contribution.
All this can be seen from the estimates
$$\frac1{\Gamma^2(z+1)\sin\pi z}
=\Oh\left(\left(\frac{\sigma^2+t^2}{e^2}\right)^{-\sigma-\frac12}\right),$$ holding for $z=\sigma+\i t$ with $\sigma\ge-\frac12$ and
$|z-w|\ge\frac12$ for $w\in\mathbb{Z}$, and
$$\left|\frac{2^{4z}\Gamma(n+1)}{\Gamma(n+1-z)}\right|=\Oh\bigg((16n)^\sigma \exp\Big(\frac{2t^2}{n-\sigma+t}\Big)\bigg),$$
holding for $z=\sigma+\i t$ with $-\frac12\le\sigma\le n+\frac12$ and
$t=\Oh\big(\sqrt{n}\big)$.
\section*{Acknowledgements}
We would like to thank two anonymous referees, whose suggestions led to substantial improvements of the paper.

\end{document}